\pgfplotsset{compat=1.17}
\newcommand{\dd}{\mathrm{d}}
\newcommand{\E}{\mathbb{E}}
\newcommand{\R}{\mathbb{R}}
\newcommand{\C}{\mathbb{C}} 
\newcommand{\T}{\mathbb{T}}
\newcommand{\p}[1]{\mathbb{P}\left( #1 \right)}
\newcommand{\abs}[1]{\left|#1\right|}
\newcommand{\comment}[1]{}
\def\thm@space@setup{%
  \thm@preskip=12pt plus 0pt minus 0pt
  \thm@postskip=0pt plus 0pt minus 0pt
}
\xpatchcmd{\proof}{6\p@\@plus6\p@\relax}{\z@skip}{}{}
\newtheorem{theorem}{Theorem}
\newtheorem{lemma}[theorem]{Lemma}
\newtheorem{corollary}[theorem]{Corollary}
\theoremstyle{remark}
\newtheorem{remark}[theorem]{Remark}
\theoremstyle{definition}
\title{Negative moments of Steinhaus sums}
\author{Martin Rapaport}
\author{Tomasz Tkocz}
\author{Isabella Wu}
\address{(MR, TT \& IW) Carnegie Mellon University; Pittsburgh, PA 15213, USA.}
\email{ttkocz@math.cmu.edu}
\thanks{Research supported in part by NSF grant DMS-2246484.}
\date{\today}
\begin{document}

\begin{abstract} 
We prove a sharp upper bound on negative moments of sums of independent Steinhaus random variables (that is uniform on circles in the plane). Together with the series of earlier works: K\"onig-Kwapie\'n (2001), \mbox{Baernstein~II}--Culverhouse (2002), and K\"onig (2014), this closes the investigation of sharp $L_p-L_2$ Khinchin-type inequalities for the Steinhaus sums. Incidentally, we fix a mistake in an earlier paper, as well as provide an application to sharp bounds on R\'enyi entropy.
\end{abstract}

\maketitle

\bigskip

\begin{footnotesize}
\noindent {\em 2020 Mathematics Subject Classification.} Primary 60E15; Secondary 26D15.

\noindent {\em Key words. Sharp moment comparison, Khinchin inequalities, Sharp reverse H\"older inequalities, Sums of independent random variables, Steinhaus random variables, R\'enyi entropy.} 
\end{footnotesize}

\bigskip

\section{Introduction and the main result}

\begin{center}
\begin{tikzpicture}[scale=1.8, >=stealth]

\coordinate (P0) at (0,0);
\node[fill=black, circle, inner sep=1pt, label=left:$0$] at (P0) {};

\def\stepsize{{1,0.7,0.4,0.65,0.25}}
\def\angles{{30,50,-25,5,-45}}

\coordinate (current) at (0,0);

\foreach \i in {1,...,5} {
    \pgfmathsetmacro{\r}{\stepsize[\i-1]}
    \pgfmathsetmacro{\a}{\angles[\i-1]}
    \pgfmathsetmacro{\dx}{\r*cos(\a)}
    \pgfmathsetmacro{\dy}{\r*sin(\a)}

    \draw[lightgray, dashed] (current) circle (\r);

    \coordinate (next) at ($(current)+(\dx,\dy)$);
    \draw[->, thick, blue!60] (current) -- (next);
    \node[fill=black, circle, inner sep=1pt, label=right:$$] at (next) {};

    \coordinate (current) at (next);
}

\end{tikzpicture}
\end{center}

How \emph{large} is the magnitude of a random walk on the plane started at the origin with independent increments chosen uniformly at random on the circles with prescribed radii? 

An answer to this vague question of course depends on how we \emph{quantify} it, whether we are interested in a typical, or limiting  behaviour, etc. One compelling choice, especially from an analytic standpoint, is to simply ask about bounds on the $L_p$-norms, which has emerged rather naturally in local theory of Banach spaces (see, e.g. \cite{LT, Pel, Woj}), complementing results in (classical) probability (see, e.g. \cite{Gu, Ka, Tutu}).

Specifically, let $a_1, a_2, \dots$ be a sequence of positive numbers and let $\xi_1, \xi_2, \dots$ be a sequence of independent identically distributed (i.i.d.) Steinhaus random variables, that is random variables uniform on the unit circle $\T = \{z \in \C, |z| = 1\}$ in the complex plane, where $|z| = \sqrt{z\bar z}$ is the usual magnitude of a complex number $z$. We form a \emph{finite} sum,
\[ 
S = \sum_{j=1}^n a_j\xi_j,
 \]
and are interested in bounds on the $L_p$-norms, $\|S\|_p = (\E|S|^p)^{1/p}$. As explained later, for the considered Steinhaus sums $S$, these are well-defined for every $p > -1$ (although, we need abuse the term ``norm'', when $p < 1$). The degenerate case $p = 0$ is understood in the usual way arising from taking the limit,
\[ 
\|S\|_0  = \E\exp(\log|S|) \  \big(= \lim_{p \downarrow 0+} \|S\|_p\big),
 \]
often referred to as the \emph{geometric mean}.

Plainly,
\[ 
\|S\|_2 = \left(\sum_{j=1}^n a_j^2\right)^{1/2} = |(a_1, \dots, a_n)|,
 \]
being the Euclidean norm $|a|$ of the coefficient vector $a=(a_1, \dots, a_n)$, is a natural reference point. 

It turns out that up to multiplicative constants \emph{all} $L_p$-norms are comparable, in that given $p > -1$, there are positive constants $A_p, B_p$ dependent only on $p$, such that for every Steinhaus sum $S$, that is for an arbitrary choice of the coefficient vector $a$ (of arbitrary length), we have
\begin{equation}\label{eq:Ap-Bp}
A_p\|S\|_2 \leq \|S\|_p \leq B_p\|S\|_2.
 \end{equation}
These types of results are often called Khinchin-type inequalities, named after \cite{Kh}.
Readily seen by the monotonicity of $(0, +\infty) \ni p \mapsto \|\cdot\|_p$, these hold with
\[ 
A_p = 1, \qquad (p \geq 2), \qquad B_p = 1, \qquad (0 < p \leq 2)
 \]
and these constants are in fact optimal, as shown by the simple example $X = \xi_1$.
 
In general, for \emph{all} positive values of $p$, \eqref{eq:Ap-Bp} follows from the classical Khinchin inequality for Rademacher sums. The latter asserts that \eqref{eq:Ap-Bp} holds (with some constants $A_p, B_p$ dependent only on $p$) for \emph{all} sums $S = \sum v_j\varepsilon_j$, where $v_1, v_2, \dots$ are vectors in a Hilbert space and $\varepsilon_1, \varepsilon_2, \dots$ are i.i.d. Rademacher random variables (random signs) taking values $\pm 1$ with probability $\frac12$ (see, e.g. Theorem 6.2.4 in \cite{Ver} for the most general result, the Khinchin-Kahane inequality treating the sequences $(v_j)$ in arbitrary Banach spaces). Conditioning on the values of $\xi_j$, we then apply this to vectors $v_j = a_j\xi_j$ in $\C$, to deduce \eqref{eq:Ap-Bp} for the Steinhaus sum $S = \sum a_j\xi_j$ which has the same distribution as $\sum (a_j\xi_j)\varepsilon_j$, thanks to the symmetry of $\xi_j$. 

There is one important nuance: for Rademacher sums, necessarily $A_p \to 0$ as $p \to 0+$ (since, for instance, $S = \varepsilon_1 + \varepsilon_2$ has an atom at $0$, thus $\|S\|_0 = 0$). This had left an interesting open question: does the same ``bad'' behaviour necessarily persist for Steinhaus sums? It was resolved independently by Favorov in \cite{Fav1} and Ullrich in \cite{Ul1} that there is a universal constant $c>0$ such that $\|S\|_0 \geq c\|S\|_2$ for every complex-valued Steinhaus sum (later extended to arbitrary Banach spaces for Steinhaus sums in \cite{Ul2}, as well as a much more general certain class of uniform distributions on compact groups in \cite{Fav4}). This nondegenerate $L_0-L_2$ comparison has had interesting applications, see \cite{Fav1, Fav2, Fav3, Ul2}; one may think of it as an AM-GM type inequality, interestingly also present in a geometric setting of log-concave distributions, as established by Lata\l a in \cite{Lat}.

When $-1 < p < 0$, perhaps even the existence of $\E|S|^p$, i.e. that this expectation is finite for an arbitrary choice of vector $a$, warrants a word of explanation. Of course in the case when specific values of the $a_j$ forbid $S=0$, we have $\E|S|^p < \infty$ for \emph{all} values of $p$. Note that for a fixed $a > 0$ and $z \in \C$, 
\[ 
|a\xi + z|^2 = a^2 + |z|^2 + 2a\text{Re}(\xi \bar z)
 \]
which by the rotational invariance of $\xi$ has the same distribution as $a^2 + |z|^2 + 2a|z|\text{Re}(\xi) = a^2 + |z|^2 + 2a|z|\cos(2\pi U) = (a^2+|z|^2)(1+\lambda \cos(2\pi U))$ with $U$ uniform on $[0,1]$ and $\lambda = \frac{2a|z|}{a^2 + |z|^2} \in [-1,1]$. Then, using independence, the finiteness of $\E|S|^p$ follows provided that $\E|a\xi+z|^p$ is finite, which needs explanation only in the case when $\lambda = \pm 1$, which in turn is equivalent to the finiteness of the integral $\int_0^1 (1+\cos(2\pi u))^{p/2} \dd u = \int_0^1 |\cos(\pi u)|^p \dd u$ which is clear for $p > -1$.
 
That \eqref{eq:Ap-Bp} holds for $-1 < p < 0$ follows from quite general results of Gorin and Favorov from \cite{GF}, see Corollary 2 therein.  

Sharp constants in inequalities of the form \eqref{eq:Ap-Bp} for various specific distributions have been extensively investigated, and we only point to \cite{BMNO, BN, Eitan, ENT-GM, ENT-Bpn, ENT-stab, FHJSZ, Haa, HNT, J, JTT, Ko, KLO, LO-best, MRTT, New, NO, NP, Ol,  Sz} for a snapshot of landmark and recent results and to, e.g. \cite{BC, CST, HT} for further references and some historical accounts. In particular, for the Steinhaus distribution, those are known for all positive values $p$. 

More specifically, from now on, for every $p \geq 0$, we let 
\begin{equation}\label{eq:def-ApBp}
A_p, B_p \ \text{be the best constants in \eqref{eq:Ap-Bp}}, 
\end{equation}
i.e. 
\[ 
A_p = \inf \|S\|_p, \qquad B_p = \sup \|S\|_p,
 \]
where the infimum and supremum are taken over all Steinhaus sums $S = \sum_{j=1}^n a_j\xi_j$, $n \geq 1$, $a_1, a_2, \dots, a_n > 0$ with $\sum_{j=1}^n a_j^2 = 1$. Depending on the context, it may be more natural to consider Steinhaus sums with \emph{complex-valued} coefficients,
\[
S = \sum_{j=1}^n z_j\xi_j, \qquad z_1, \dots, z_n \in \C.
\]
Of course that is the same level of generality, because due to the rotational invariance of the distribution of the $\xi_j$, each $z_j\xi_j$ has the same distribution as $|z_j|\xi$. 

Constants $A_p, B_p$ are known for all $p \geq 0$, 
\begin{equation}\label{eq:Ap-Bp-sharp}
A_p = \begin{cases}
\|(\xi_1+\xi_2)/\sqrt{2}\|_p, & 0 \leq p \leq p_*, \\
\|Z\|_p, & p_* \leq p \leq 2, \\
1, & p \geq 2,
\end{cases} \qquad
B_p = \begin{cases}
1, & 0 \leq p \leq 2, \\
\|Z\|_p, & p \geq 2,
\end{cases}
 \end{equation}
where $Z$ is a Gaussian random vector in $\C$ with density $\frac{1}{\pi}e^{-|z|^2}$ so that $\E|Z|^2 = 1$ (i.e. that $\xi_j$ and $Z$ have the matching covariance matrix $\frac12I_{2\times 2}$), and $p_* = 0.48..$ is uniquely determined as a solution to the equation $\|(\xi_1+\xi_2)/\sqrt{2}\|_p = \|Z\|_p$ for $0 < p < 2$. This curious behaviour of the sharp constants featuring the sharp phase transition of the extremisers at $p_*$ was conjectured by Haagerup (as early as 1977, see \cite{Pel}, p. 151). This is reminiscent of a similar behaviour for Rademacher sums established by Haagerup in his seminal work \cite{Haa}. This has also been explored for higher dimensional analogues of random vectors uniform on spheres, with natural connections to extremal volume sections of the cube \cite{CKT}, and polydisc \cite{CST}. 

For Steinhaus sums, establishing \eqref{eq:Ap-Bp-sharp} was quite a quest: Sawa in \cite{Saw} found $A_1$, Peskir in \cite{Pes} found $B_p$ for even $p$,  independently K\"onig (the original preprint supplanted by \cite{Ko}) and Culverhouse in \cite{Cul} found $B_p$ for all $p \geq 2$, K\"onig and Kwapie\'n in \cite{KK} found $A_p$ for $1 < p < 2$ (``with vigorous hard analysis'', as put in \cite{BC}), and K\"onig in \cite{Ko} found $A_p$ for $0 < p < 1$, although there is a gap in a part of his proof, which we address here.



To bring the status of our knowledge for Steinhaus sums to the same level as for Rademacher sums, what has been left open is therefore determining the value of $A_p$ for $-1 < p < 0$ (see also Table 1 in \cite{CST}). This is our main result.

\begin{theorem}\label{thm:cp}
With notation \eqref{eq:Ap-Bp-sharp}, for $-1 < p < 0$, we have $A_p = \|(\xi_1+\xi_2)/\sqrt{2}\|_p$.
\end{theorem}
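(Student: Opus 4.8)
The plan is to show that for $-1<p<0$ the infimum of $\|S\|_p$ over all Steinhaus sums $S=\sum_{j=1}^n a_j\xi_j$ with $\sum a_j^2=1$ is attained, in the limit or exactly, by the two-term sum $(\xi_1+\xi_2)/\sqrt2$, i.e. that $\|S\|_p \ge \|(\xi_1+\xi_2)/\sqrt2\|_p$ always holds, with equality for that example. For $p<0$ the functional $S\mapsto \E|S|^p$ is \emph{increasing} under ``spreading out'' the coefficient vector in the appropriate sense, so heuristically the extremiser should be as concentrated as possible; the obstruction to a single term is that $\|\xi_1\|_p=1$ and one checks $\|(\xi_1+\xi_2)/\sqrt2\|_p<1$ for $p<0$, so the genuine minimiser has exactly two equal atoms. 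The first step is therefore to prove the reduction to $n=2$: I would argue that among all coefficient vectors, $\E|S|^p$ is not decreased by merging/replacing coefficients so as to reach the configuration $a=(1/\sqrt2,1/\sqrt2)$, or by a compactness-plus-perturbation argument that any local minimiser with $n\ge 3$ can be perturbed (keeping $\sum a_j^2$ fixed) to strictly decrease $\E|S|^p$, hence is not optimal.

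Concretely, I would condition on $\xi_1,\dots,\xi_{n-1}$ and write $S=W+a_n\xi_n$ where $W$ is the partial sum; using the computation already recorded in the excerpt, $|W+a_n\xi_n|^2$ has the law of $(|W|^2+a_n^2)(1+\lambda\cos 2\pi U)$ with $\lambda=\frac{2a_n|W|}{|W|^2+a_n^2}$. Thus
\[
\E\big[\,|S|^p \,\big|\, W\,\big] = (|W|^2+a_n^2)^{p/2}\, I_p(\lambda), \qquad I_p(\lambda):=\int_0^1 (1+\lambda\cos 2\pi u)^{p/2}\,\dd u,
\]
and the whole problem becomes understanding the function $I_p$ on $[-1,1]$ together with how the ``profile'' $|W|$ is distributed. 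The key analytic input I expect to need is convexity/monotonicity properties of $\lambda\mapsto I_p(\lambda)$ and of $r\mapsto (r^2+a^2)^{p/2}I_p(\tfrac{2ar}{r^2+a^2})$ for $p<0$: these should let me run an induction on $n$, peeling off one coordinate at a time and comparing against the two-term benchmark, in the spirit of K\"onig--Kwapie\'n's treatment of $1<p<2$ but with inequalities reversed because $p<0$. A clean way to organise this is to prove a pointwise (in the conditioning) inequality of the form $\E[|W+a\xi|^p\mid W]\ge g(\|W\|_2^2 + a^2)$ for a suitable convex (or concave) $g$ compatible with the recursion, so that Jensen's inequality closes the loop.

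I expect the main obstacle to be exactly the behaviour of $I_p$ near the singular endpoint $\lambda=\pm1$: for $p\in(-1,0)$ the integrand $(1+\lambda\cos2\pi u)^{p/2}$ blows up (integrably) as $u$ ranges near the zero of $1+\lambda\cos 2\pi u$, so $I_p(\lambda)\to I_p(1)=\int_0^1|\cos\pi u|^p\,\dd u<\infty$ but with infinite derivative, and the relevant monotonicity/convexity statements become delicate precisely there — this is presumably also where the gap in K\"onig's $0<p<1$ argument lives. I would handle this by a careful asymptotic analysis of $I_p(\lambda)$ as $\lambda\uparrow 1$ (extracting the leading singular term, something like a constant times $(1-\lambda)^{(p+1)/2}$ plus a smooth remainder), and by treating the degenerate case where some partial sum can vanish separately via the explicit finiteness argument already given in the excerpt. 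Once the $n=2$ reduction is in place, the final step is the elementary evaluation that for $a=(1/\sqrt2,1/\sqrt2)$ one has $\|S\|_p^p = I_p(1) = \int_0^1|\cos\pi u|^p\,\dd u$ (matching the claimed extremiser), and verifying this lies strictly below $\|Z\|_p^p$ and below $1$ for all $p\in(-1,0)$, so that no boundary phenomenon at $p=0$ or $p\to-1$ interferes.
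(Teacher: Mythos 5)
Your sketch correctly identifies the extremiser and the single-coordinate conditioning formula, but it has two genuine gaps that the paper's proof exists precisely to overcome. First, a ``peel off one coordinate'' induction cannot close on its own: the inductive step (in whatever form) only works when one coefficient dominates, and the \emph{balanced} regime --- all $|a_j|\le \tfrac{1}{\sqrt2}\big(\sum a_k^2\big)^{1/2}$ --- is not reachable this way. The paper handles that regime by an entirely separate Fourier--Bessel argument: write $\E|S|^{p}$ as $\kappa\int_0^\infty\prod_j J_0(t a_j)t^{-p-1}\,\dd t$, decouple by H\"older, and prove the technical monotonicity $\Psi_{-p}(s)\le\Psi_{-p}(2)$ for $s\ge2$ (which is most of the work). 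Your proposal has no substitute for this step. Second, the ``Jensen closes the loop'' plan runs into the known convexity obstruction: since $p<0$ you need an \emph{upper} bound on $\E|S|^p$ (your displayed $\ge$ is in the wrong direction), and the natural comparison function $x\mapsto(1+x)^{p/2}$ is convex, so averaging over the peeled-off variable pushes the wrong way. Nazarov--Podkorytov's fix, adopted in the paper, is to strengthen the inductive claim by replacing $\phi_p$ with a reflected function $\Phi_p\le\phi_p$ enjoying an ``extended concavity,'' and to combine the \emph{last two} coefficients via $(\xi_{n-1},\xi_n)\sim(\xi_{n-1},\xi_{n-1}\xi_n)$ rather than conditioning on the whole partial sum $W$ (whose law is intractable). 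None of this is present or replaceable by properties of $I_p(\lambda)$ alone.

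The opening ``reduction to $n=2$ by merging/perturbation'' is also not an argument: the same heuristic would predict a two-term extremiser for all $p<2$, yet for $p_*\le p\le 2$ the infimum is the Gaussian constant $\|Z\|_p$ (attained only in the limit $n\to\infty$), so any such perturbation scheme must fail or must secretly use something specific to $p<p_*$; moreover the infimum over all $n$ is not a priori attained at finite $n$. Two smaller points: the gap in K\"onig's paper is not the endpoint singularity of $I_p$ at $\lambda=\pm1$ but an inductive hypothesis applied to sequences violating its own normalisation; and your evaluation of the extremal value drops a factor, since $I_p(1)=\int_0^1(1+\cos 2\pi u)^{p/2}\,\dd u=2^{p/2}\int_0^1|\cos\pi u|^{p}\,\dd u$.
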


The next section is devoted to the proof of Theorem \ref{thm:cp}. We start with an overview, then work through technical lemmas, and finish off with an inductive proof. Adapting that inductive scheme, incidentally, we fix the gap from \cite{Ko} in Section \ref{sec:Ko}. Finally, Section \ref{sec:Renyi} is concerned with a sharp upper bound on the R\'enyi entropies of Steinhaus sums.

\section{Proof of Theorem \ref{thm:cp}}

\subsection{Overview.}
To be consistent with the notation used in the earlier works that we shall rely on (mainly \cite{CST}), we let $0 < p < 1$ throughout, naturally identify $\C$ with $\R^2$ (equipped with their Euclidean structures) whenever needed, and begin with rephrasing Theorem \ref{thm:cp} equivalently as: for all $n \geq 1$, $z_1, \dots, z_n \in \C$, we have
\begin{equation}\label{eq:thm-Ap-equiv}
\E\left[\left|\sum_{j=1}^{n} z_j\xi_j\right|^{-p}\ \right] \leq C_p\left(\sum_{j=1}^n |z_j|^2\right)^{-p/2},
\end{equation} 
where
\begin{equation}\label{eq:Cp}
C_p = A_p^{-p} = \E\left[\left|(\xi_1+\xi_2)/\sqrt{2}\right|^{-p}\right] = 2^{p/2}\frac{\Gamma(1-p)}{\Gamma\left(1-p/2\right)^2}.
\end{equation}
The explicit expression for $C_p$ is justified in Corollary 14 in \cite{CST} specialised to $d=2$ therein. 

Note that \eqref{eq:thm-Ap-equiv} clearly holds when $n = 1$ because $A_p = \|(\xi_1+\xi_2)/\sqrt{2}\|_p \leq  \|(\xi_1+\xi_2)/\sqrt{2}\|_2 = 1$, that is $C_p \geq 1$. Therefore throughout the whole proof, we can assume that $n \geq 2$ and that at least two of the $z_j$ are nonzero.

For the proof, in a great nutshell, we follow a well-established strategy, used e.g. in \cite{Ball-cube, CGT, CKT, CST, Haa, Ko, KK, NP, OP}.

\emph{Step I: ``Fourier+H\"older''.} We start off with a Fourier-analytic formula for $p$-moments to which we apply  H\"older's inequality for carefully chosen exponents, in order to \emph{decouple} the contributions coming from the summands $z_j\xi_j$. This powerful idea is rooted in \cite{Ball-cube, Haa, Hens}, also widely used across analysis and related areas, for instance in Section 5 of \cite{Pierce}.

Squeezing as much as possible out of this, alas, will still leave us with the case when essentially one term dominates the rest (intuitively, a factorisation--decoupling-type of bound works in a \emph{balanced} situation, when none of the terms prevails). 

\emph{Step II: Induction.} To circumvent this, we employ an inductive argument of Nazarov-Podkorytov from \cite{NP}, which they developed for random signs. It does however get nuanced for continuous distributions, and this is the part where \cite{Ko} left a rather major gap (which we shall comment on and address carefully at the end of our proof). We shall employ the advancements made in \cite{CGT, CKT, CST}.

\subsection{Specifics}
Step I begins with a Fourier-inversion formula for the power function $t^{p}$ -- for details see, e.g. Corollary 5 from \cite{CST} (applied to $X_k = z_k\xi_k$ and $d=2$) which gives
\begin{equation}\label{eq:neg-mom-formula}
\E\left[\left|\sum_{j=1}^{n} z_j\xi_j\right|^{-p}\ \right] = \kappa_{p}\int_0^\infty \prod_{j=1}^{n} J_0(t|z_j|)t^{p-1}\dd t,
 \end{equation}
provided the integral on the right hand side converges,
where
\begin{equation}\label{eq:def-kappa}
\kappa_{p} = 2^{1-p}\frac{\Gamma(1-p/2)}{\Gamma(p/2)}
 \end{equation}
and $J_0$ is the Bessel function of the first kind with index $0$, which can be defined by several means, e.g.,
\[ 
J_0(t) = \frac{1}{\pi}\int_0^\pi \cos(t\sin u) \dd u = \sum_{k=0}^{\infty} \frac{(-1)^k}{(2^kk!)^2}t^{2k}, \qquad t \in \R.
 \]
To quickly justify the integrability, we can invoke the following bound (useful in the sequel as well),
\begin{equation}\label{eq:J0-up-bd1}
|J_0(t)| \leq \min\left\{1, \sqrt{\frac{2}{\pi t}}\right\}, \qquad t \geq 0,
\end{equation}
where $|J_0(t)| \leq 1$ is clear from the integral representation and the $O(t^{-1/2})$ bound follows from a succinct general theorem on oscillating solutions of second order ODEs, Sonin's theorem (see Section 7.31 and (7.31.5) in \cite{Sze}). In particular, with at least two nonzero coefficients $z_j$, the integrand on the right hand side of \eqref{eq:neg-mom-formula} is $O(t^{p-2})$ as $t \to \infty$, whereas it plainly is $O(t^{p-1})$ as $t \to 0$, hence the integral absolutely converges. 

Next, we recall how the argument from Step I proceeds: we put
\[ 
a_j = |z_j|, \qquad j = 1, \dots, n,
 \]
assume that $n \geq 2$ and $a_j > 0$ for each $j \leq n$. With the normalisation
\[
\sum_{j=1}^n a_j^2 = 1,\]
H\"older's inequality yields
\[ 
\int_0^\infty \prod_{j=1}^{n} J_0(ta_j)t^{p-1}\dd t \leq \prod_{j=1}^n \left(\int_0^\infty J_0(ta_j)t^{p-1}\dd t\right)^{a_j^2}.
 \]
We emphasise that when $n=2$ and $a_1 = a_2 = \frac{1}{\sqrt2}$, there is equality.
Thus we define,
\begin{equation}\label{eq:def-Psi}
\Psi_p(s) = \int_0^\infty |J_0(t/\sqrt{s})|^st^{p-1} \dd t = s^{p/2}\int_0^\infty |J_0(t)|^s t^{p-1} \dd t,
\end{equation}
which, in view of \eqref{eq:J0-up-bd1}, converges provided that $s > 2p$. Consequently, Fourier-analytic formula \eqref{eq:neg-mom-formula} gives
\[ 
\E\left[\left|\sum_{j=1}^{n} z_j\xi_j\right|^{-p}\ \right] \leq \kappa_p\prod_{j=1}^n \Big(\Psi_p(a_j^{-2})\Big)^{a_j^2}
 \]
with equality when $n=2$ and $a_1 = a_2 = \frac{1}{\sqrt2}$, which becomes
\[ 
C_p = \kappa_p\Psi_p(2) = \kappa_p\int_0^\infty J_0(t)^2t^{p-1} \dd t.
 \]
Let us record here for future use that (recalling \eqref{eq:Cp}, \eqref{eq:def-kappa}),
\begin{equation}\label{eq:Psi(2)}
\Psi_p(2) = C_p/\kappa_p = 2^{3p/2-1}\Gamma(1-p)\Gamma(p/2)\Gamma(1-p/2)^{-3}.
\end{equation}
The heavy-lifting is done in the following elementary yet involved analytic lemma, which will be proved in the sequel.

\begin{lemma}\label{lm:Psi<Psi(2)}
Let $0 < p < 1$. For function $\Psi_p$ defined in \eqref{eq:def-Psi}, we have
\[ 
\Psi_p(s) \leq \Psi_p(2), \qquad s \geq 2.
 \]
\end{lemma}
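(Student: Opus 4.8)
The plan is to treat $\Psi_p$ as a function of the single variable $s\in[2,\infty)$ and show its maximum is attained at the left endpoint. This follows from two facts: \textbf{(i)} the limit $\Psi_p(\infty):=\lim_{s\to\infty}\Psi_p(s)$ does not exceed $\Psi_p(2)$; and \textbf{(ii)} $\Psi_p$ is unimodal on $[2,\infty)$ in the sense of being first nonincreasing and then nondecreasing, so that $\max_{s\ge 2}\Psi_p(s)=\max\{\Psi_p(2),\Psi_p(\infty)\}$, which equals $\Psi_p(2)$ by (i).

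For (i) I would first record the pointwise bound $-\log J_0(t)\ge t^2/4$ on $[0,j_1)$, where $j_1=2.40\ldots$ is the first positive zero of $J_0$; this is immediate from the Hadamard factorisation $J_0(t)=\prod_k\bigl(1-t^2/j_k^2\bigr)$, since $-\log J_0(t)=\sum_{m\ge1}\frac{t^{2m}}{m}\sum_k j_k^{-2m}$ has all coefficients nonnegative and $\sum_k j_k^{-2}=\tfrac14$. Equivalently $J_0(t)\le e^{-t^2/4}$ on $[0,j_1)$, so in the representation $\Psi_p(s)=\int_0^\infty |J_0(t/\sqrt s)|^s t^{p-1}\dd t$ one has $|J_0(t/\sqrt s)|^s\le e^{-t^2/4}$ for $t\le j_1\sqrt s$, while for $t\ge j_1\sqrt s$ the bound \eqref{eq:J0-up-bd1} gives $|J_0(t/\sqrt s)|^s\le (2\sqrt s/\pi)^{s/2}t^{-s/2}$, whose integral against $t^{p-1}$ over $[j_1\sqrt s,\infty)$ equals $\frac{(j_1\sqrt s)^p}{s/2-p}(2/(\pi j_1))^{s/2}\to0$. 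Dominated convergence then yields $\Psi_p(\infty)=\int_0^\infty e^{-t^2/4}t^{p-1}\dd t=2^{p-1}\Gamma(p/2)$. Comparing with \eqref{eq:Psi(2)}, the inequality $\Psi_p(\infty)\le\Psi_p(2)$ becomes exactly $\Gamma(1-p/2)^3\le 2^{p/2}\Gamma(1-p)$ for $0<p<1$, which I would prove by setting $L(p)=\tfrac p2\log 2+\log\Gamma(1-p)-3\log\Gamma(1-p/2)$ and checking $L(0)=0$, $L'(0)=\tfrac12(\log2-\gamma)>0$, and $L''(p)=\psi'(1-p)-\tfrac34\psi'(1-p/2)>0$, the last since the trigamma function $\psi'$ is positive and strictly decreasing.

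For (ii) I would differentiate under the integral sign: $\Psi_p'(s)=s^{p/2-1}\int_0^\infty |J_0(t)|^s\bigl(\tfrac p2+s\log|J_0(t)|\bigr)t^{p-1}\dd t$, so the sign of $\Psi_p'(s)$ equals the sign of $\frac{\dd}{\dd s}\log\Psi_p(s)=\frac p{2s}+\E_{\nu_s}[\log|J_0(t)|]$, where $\nu_s$ is the probability measure proportional to $|J_0(t)|^s t^{p-1}\dd t$. A direct computation gives $\frac{\dd^2}{\dd s^2}\log\Psi_p(s)=\Var_{\nu_s}[\log|J_0(t)|]-\frac p{2s^2}$, so it suffices to prove $\Var_{\nu_s}[\log|J_0(t)|]\ge\frac p{2s^2}$ for $s\ge2$: then $\log\Psi_p$ is convex on $[2,\infty)$, hence $\frac{\dd}{\dd s}\log\Psi_p$ is nondecreasing there, and combined with (i) (which forbids $\Psi_p$ from increasing throughout) this forces $\Psi_p$ to be nonincreasing and then nondecreasing, as required. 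To bound the variance below I would split the $t$-integral at $j_1$: on $(0,j_1)$ use $-\log J_0(t)\ge t^2/4$ and estimate the contribution by comparison with the Gaussian moments $\E[t^2]=2p/s+O(s^{-2})$, $\E[t^4]=4p(p+2)/s^2+O(s^{-3})$ of the measure $\propto e^{-st^2/4}t^{p-1}$; on $[j_1,\infty)$ use $|J_0(t)|\le c_0:=\max_{t\ge j_1}|J_0(t)|=0.40\ldots<1$, so $-\log|J_0(t)|$ is bounded below there by a constant exceeding $p/2$.

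The main obstacle is that the inequality in (ii) is asymptotically tight: because the Steinhaus and Gaussian laws share their second moment, one checks $s^2\bigl(\Var_{\nu_s}[\log|J_0(t)|]-\tfrac p{2s^2}\bigr)\to0$ as $s\to\infty$, so no crude estimate suffices — the cancellation must be tracked to second order near $t=0$, matching the two leading Gaussian moments above, which is where essentially all of the analytic work lies. Likewise, at $s=2$ the inequality $\Psi_p'(2)\le0$ is equality-adjacent (since $n=2$ with equal coefficients is the extremiser), so it too needs a non-lossy estimate. If pushing global log-convexity through for all $s\ge2$ proves awkward, a workable fallback is a two-regime argument: establish $\Psi_p'(s)\le0$ on a compact interval $[2,s_0]$ via the refined near-origin expansion, and for $s\ge s_0$ use the split $\int_0^{a}+\int_a^\infty$ with $J_0\le e^{-t^2/4}$ on $(0,a)$ and \eqref{eq:J0-up-bd1} on $(a,\infty)$, choosing $s_0$ large enough that the exponentially small tail is dominated by the strictly positive gap $\Psi_p(2)-\Psi_p(\infty)$.
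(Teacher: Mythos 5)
Your part (i) is solid and in fact reproduces a genuine piece of the paper's argument: the limit computation $\Psi_p(\infty)=\int_0^\infty e^{-t^2/4}t^{p-1}\,\dd t=2^{p-1}\Gamma(p/2)$ (via the Hadamard-factorisation bound $J_0(t)\le e^{-t^2/4}$ on $(0,j_1)$ plus dominated convergence) is correct, and your reduction of $\Psi_p(\infty)\le\Psi_p(2)$ to $2^{p/2}\Gamma(1-p)\ge\Gamma(1-p/2)^3$ is exactly the statement $D(p)\ge1$ that the paper isolates as Lemma~\ref{lm:D(p)}, proved there by the same log-convexity-of-$\Gamma$ computation you sketch.

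Part (ii), however, is a genuine gap, and it is where the lemma actually lives. Your proposed route is to show $\log\Psi_p$ is convex on $[2,\infty)$, i.e.\ that $\Var_{\nu_s}[\log|J_0|]\ge\frac{p}{2s^2}$ for all $s\ge2$. You do not prove this; you only observe (correctly) that the inequality is asymptotically tight as $s\to\infty$, which is precisely what makes it delicate. The difficulty is structural, not merely technical: $\Psi_p(s)=s^{p/2}F_p(s)$ is the product of the log-convex $F_p$ (by H\"older) and the log-concave $s^{p/2}$, so the sign of $(\log\Psi_p)''$ is a genuine competition that degenerates to zero at leading order. Moreover, deciding the sign of the $O(s^{-3})$ correction requires matching the $t^4/64$ term in $-\log J_0$ against the perturbed moments of the reference Gaussian measure, and you have not carried this out (nor verified its sign). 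Your fallback--- monotonicity on a compact $[2,s_0]$ plus a crude tail bound for $s\ge s_0$---is likewise only a plan; and the crude near-origin bound $J_0\le e^{-t^2/4}$ is too lossy near $s=2$ (the paper explicitly needs the sharper $J_0\le e^{-t^2/4-t^4/64}$ and even then its upper function $U_p$ only beats $G_p$ for $s\ge3$, with the window $[2,3]$ handled separately by interpolating the log-convex $F_p$ between $F_p(2)$ and $F_p(3)$).

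In short, the paper never needs unimodality or log-convexity of $\Psi_p$ itself; it works with the unambiguously log-convex $F_p(s)=s^{-p/2}\Psi_p(s)$, compares an explicit upper bound $U_p$ to the target $G_p=s^{-p/2}\Psi_p(2)$ for $s\ge3$, and bridges $[2,3]$ by H\"older interpolation against a strengthened estimate at $s=3$. Your approach would be an interesting alternative if the variance inequality could be proved, but as written the central step is asserted rather than established, so the proof is incomplete.
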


In particular, if $a_j \leq \frac{1}{\sqrt2}$ for each $j \leq n$, we obtain
\[ 
\E\left[\left|\sum_{j=1}^{n} z_j\xi_j\right|^{-p}\ \right] \leq \kappa_p \prod_{j=1}^n \Big(\Psi_p(2)\Big)^{a_j^2} = \kappa_p\Psi_p(2) = C_p.
 \]
We immediately conclude with the following corollary.

\begin{corollary}\label{cor:StepI}
Let $0 < p < 1$. Inequality \eqref{eq:thm-Ap-equiv} holds provided that for each $k \leq n$,
\[ 
|z_k| \leq \frac{1}{\sqrt{2}}\left(\sum_{j=1}^n |z_j|^2\right)^{1/2}.
 \]
\end{corollary}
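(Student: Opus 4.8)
The plan is simply to assemble the pieces that Step~I has already put in place. First I would reduce to a normalised configuration: both sides of \eqref{eq:thm-Ap-equiv} are homogeneous of degree $-p$ under the simultaneous scaling $z_j \mapsto \lambda z_j$, $\lambda>0$, so I may assume $\sum_{j=1}^n |z_j|^2 = 1$. Writing $a_j = |z_j|$, I would then dispose of the degenerate cases: if all $z_j$ vanish the asserted inequality is $+\infty\le C_p\cdot(+\infty)$ and there is nothing to prove, while the hypothesis $a_k \le \tfrac1{\sqrt2}$ (which is what $|z_k|\le\tfrac1{\sqrt2}(\sum_j|z_j|^2)^{1/2}$ becomes after normalisation) forces $a_k = 0$ as soon as $z_k$ is the only nonzero coefficient; hence under the hypothesis at least two $a_j$ are strictly positive. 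Discarding the vanishing coefficients (this preserves $\sum a_j^2 = 1$ and the hypothesis) leaves $n\ge 2$, $a_1,\dots,a_n>0$, $\sum a_j^2 = 1$ and $a_j\le\tfrac1{\sqrt2}$, equivalently $a_j^{-2}\ge 2$, for every $j$.

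Next I would invoke the machinery of Step~I. Since there are at least two nonzero coefficients, the bound \eqref{eq:J0-up-bd1} makes the integrand in \eqref{eq:neg-mom-formula} equal to $O(t^{p-2})$ at infinity and $O(t^{p-1})$ at the origin, so that formula applies:
\[
\E\Big[\Big|\textstyle\sum_{j=1}^n z_j\xi_j\Big|^{-p}\Big] = \kappa_p\int_0^\infty \prod_{j=1}^n J_0(ta_j)\, t^{p-1}\,\dd t ,
\]
with $\kappa_p>0$ because $0<p<1$. Splitting $t^{p-1}=\prod_j t^{a_j^2(p-1)}$ and applying H\"older's inequality with the conjugate exponents $q_j=a_j^{-2}$ (so that $\sum_j q_j^{-1}=\sum_j a_j^2 = 1$) gives, exactly as recorded just before Corollary~\ref{cor:StepI},
\[
\int_0^\infty \prod_{j=1}^n J_0(ta_j)\,t^{p-1}\,\dd t
\ \le\ \prod_{j=1}^n \Big(\int_0^\infty |J_0(ta_j)|^{a_j^{-2}}\, t^{p-1}\,\dd t\Big)^{a_j^2}
= \prod_{j=1}^n \Psi_p\big(a_j^{-2}\big)^{a_j^2},
\]
the last equality being the definition \eqref{eq:def-Psi} of $\Psi_p$ with $s=a_j^{-2}$.

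The final step is where Lemma~\ref{lm:Psi<Psi(2)} enters: since $a_j^{-2}\ge 2$ and $\Psi_p(2)>0$, it yields $\Psi_p(a_j^{-2})\le\Psi_p(2)$ for each $j$, whence
\[
\prod_{j=1}^n \Psi_p\big(a_j^{-2}\big)^{a_j^2}\ \le\ \prod_{j=1}^n \Psi_p(2)^{a_j^2} = \Psi_p(2)^{\sum_j a_j^2} = \Psi_p(2),
\]
and combining the displays gives $\E\big[|\sum_j z_j\xi_j|^{-p}\big]\le\kappa_p\Psi_p(2)=C_p$, which is \eqref{eq:thm-Ap-equiv} under the normalisation; undoing the scaling finishes it. I do not expect any genuine obstacle inside this corollary: all the analytic difficulty has been quarantined in Lemma~\ref{lm:Psi<Psi(2)}, which I expect to be the technical heart of the argument, and the complementary regime -- where some $|z_k|$ exceeds $\tfrac1{\sqrt2}(\sum_j|z_j|^2)^{1/2}$, so that one summand genuinely dominates and the decoupling via H\"older is lossy -- is exactly what the Nazarov--Podkorytov induction of Step~II will have to absorb.
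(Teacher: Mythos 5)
Your proof is correct and follows essentially the same route the paper takes: normalise so $\sum a_j^2=1$, apply the Fourier formula \eqref{eq:neg-mom-formula} together with the generalized H\"older inequality (exponents $q_j=a_j^{-2}$) to reduce to the product $\prod_j\Psi_p(a_j^{-2})^{a_j^2}$, and then invoke Lemma~\ref{lm:Psi<Psi(2)} since the hypothesis forces $a_j^{-2}\geq 2$. You are slightly more explicit than the paper about discarding zero coefficients and about ruling out the single-nonzero-coefficient case, but there is no substantive difference in approach.
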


\begin{remark}
Numerical simulations strongly suggest that $s \mapsto \Psi_p(s)$ is in fact strictly decreasing in an open neighbourhood of $s=2$. In particular, we anticipate that the lemma fails for $s < 2$, which is precisely what we alluded to when we said that we would ''squeeze as much as possible'' in this step.
\end{remark}

\subsection{Induction}
Leveraging the homogeneity of \eqref{eq:thm-Ap-equiv} again, note that under the normalisation
\[ 
z_1 = 1
 \]
(relabelling, if necessary), inequality \eqref{eq:thm-Ap-equiv} is equivalent to the statement that for all $n \geq 2$, $z_2, \dots, z_n \in \C$, we have
\begin{equation}\label{eq:Cp-z1=1}
\E\left[\left|\xi_1 + \sum_{j=2}^{n} z_j\xi_j\right|^{-p}\ \right] \leq C_p\phi_p\left(\sum_{j=2}^n |z_j|^2\right),
 \end{equation}
where
\[ 
\phi_p(x) = (1+x)^{-p/2}, \qquad x \geq 0.
 \]
\begin{remark}\label{rem:Fourier-case} 
Corollary \ref{cor:StepI} establishes this in the special case when
\[ 
|z_2|, \dots, |z_n| \leq 1, \qquad \text{and} \qquad \sum_{j=2}^n |z_j|^2 \geq 1.
 \]
\end{remark}
In order to handle the remaining cases, we follow the approach by Nazarov and Podkorytov from \cite{NP} which is to induct on $n$. However, for \eqref{eq:thm-Ap-equiv} or \eqref{eq:Cp-z1=1} as is, a natural inductive argument does not go through; in essence, the trouble is that $\phi_p$ is convex, whereas one would like it to be concave (for the flavour and more details of the arising difficulties, see \cite[p. 263]{NP}). A compelling idea is to make the assertion stronger, so that the inductive hypothesis is stronger. This is done by considering instead of $\phi_p$, a point-wise smaller function with better-suited concavity-like properties. We set
\begin{equation}\label{eq:def-Phi}
\Phi_p(x) = \begin{cases}
\phi_p(x), & x \geq 1, \\
2\phi_p(1)-\phi_p(2-x), & 0 \leq x \leq 1.
\end{cases}
 \end{equation}

\begin{figure}[htb!]
\centering
\begin{tikzpicture}
  \begin{axis}[
      axis lines = middle,
      axis line style = {->},
 xlabel = $x$,
      ylabel = {$y$},
	  xlabel style = {at={(1,0)}, anchor=west, below=2pt}, 
      ylabel style = {at={(0,1)}, anchor=south east, left=2pt},
      domain = 0:3,
      samples = 300,
      xtick = \empty,
      ytick = \empty,
      grid = none,
      ymin = 0.5,
      ymax = 1.1,
      clip=false,
      legend style = {at={(0.97,0.97)}, anchor=north east, draw=none}
    ]

    \def\p{0.9}

    \pgfmathsetmacro{\yone}{(1+1)^(-\p/2)}     
    \pgfmathsetmacro{\ytwo}{(1+2)^(-\p/2)}     
    \pgfmathsetmacro{\slope}{-(\p/2)*(1+1)^(-\p/2 - 1)} 
    \pgfmathsetmacro{\Phizero}{2*(1+1)^(-\p/2) - (1+2)^(-\p/2)} 
    \pgfmathsetmacro{\ytangentzero}{\slope*(0 - 1) + \yone} 

    \addplot[blue, thick, domain=0:3] {(1+x)^(-\p/2)};
    \addlegendentry{$\phi_p(x)$}

    \addplot[red, thick, domain=0:1]
      {2*(1+1)^(-\p/2) - (1+(2-x))^(-\p/2)};
    \addplot[red, thick, domain=1:3]
      {(1+x)^(-\p/2)};
    \addlegendentry{$\Phi_p(x)$}

    \addplot[blue, mark=*, only marks, mark size=2pt] coordinates {(0,1)};

    \addplot[red, mark=*, only marks, mark size=2pt] coordinates {(0,\Phizero)};

    \addplot[green!60!black, thick, domain=0:1.8]
      {\slope*(x - 1) + \yone};

    \addplot[green!60!black, mark=*, only marks, mark size=2pt]
      coordinates {(0,\ytangentzero) (1,\yone)};

    \addplot[dashed, gray] coordinates {(1,0.5) (1,\yone)};
    \addplot[dashed, gray] coordinates {(2,0.5) (2,\ytwo)};

    \node[below=6pt] at (axis cs:1,0.5) {$x=1$};
    \node[below=6pt] at (axis cs:2,0.5) {$x=2$};
    \node[left] at (axis cs:0,1) {$y=1$};

  \end{axis}
\end{tikzpicture}
\caption{An ingenious construction of $\Phi_p$: the plots show $\phi_p$,~$\Phi_p$ and their common tangent at $x=1$.}
\label{fig:phi}
\end{figure}
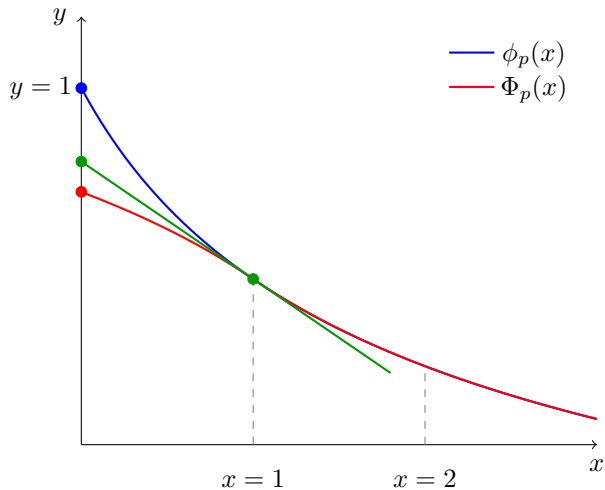

Now, $\Phi_p \leq \phi_p$ (see Figure \ref{fig:phi}) and $\Phi_p$ is plainly concave on $[0,1]$. Additionally, it enjoys the ``extended concavity'' from the next lemma (as called in and known from the earlier papers, put forth in \cite{NP}).

\begin{lemma}[Lemma 33 in \cite{CST}, Lemma 20 in \cite{CKT}]\label{lm:extended-convexity}
Let $p > 0$. For every $a_-, a_+ \geq 0$ with $\frac{1}{2}(a_-+a_+) \leq 1$, we have
\[ 
\frac{\Phi_p(a_-) + \Phi_p(a_+)}{2} \leq \Phi_p\left(\frac{a_- + a_+}{2}\right).
 \]
\end{lemma}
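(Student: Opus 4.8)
The plan is to trade the two‑variable midpoint inequality for a one‑variable convexity estimate, exploiting the affine‑reflection symmetry built into $\Phi_p$. Write $\phi_p(y)=(1+y)^{-p/2}$, which is strictly decreasing and strictly convex on $[0,\infty)$, and record the two structural features that make the construction work: $\Phi_p$ is concave on $[0,1]$, since there $\Phi_p''(x)=-\phi_p''(2-x)<0$; and for $s\in[0,1]$ there is the reflection identity $\Phi_p(1+s)+\Phi_p(1-s)=2\phi_p(1)$, immediate from the lower branch of \eqref{eq:def-Phi}. By symmetry we may assume $a_-\le a_+$, and recall the hypothesis $a_-+a_+\le 2$. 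If $a_+\le 1$ then both points lie in $[0,1]$ and the claim is just midpoint concavity of $\Phi_p$ there, so from now on $a_+>1$, which (using $a_-\ge 0$ and $a_-+a_+\le 2$) forces $a_-<1$ and $a_+\le 2$.

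Next I would reduce to one variable. Write $a_-=1-\beta$ and $a_+=1+\alpha$ with $\alpha,\beta\in(0,1]$; the constraint $a_-+a_+\le 2$ is precisely $\alpha\le\beta$, and the midpoint is $m=1-\tfrac{\beta-\alpha}{2}\in(\tfrac12,1]$. Plugging the appropriate branches of \eqref{eq:def-Phi} — the lower branch at $a_-$ and at $m$, and $\Phi_p(a_+)=\phi_p(a_+)$ — into $\Phi_p(a_-)+\Phi_p(a_+)\le 2\Phi_p(m)$ and cancelling the common multiples of $\phi_p(1)$, the claim becomes
\[
\phi_p(1+\alpha)-\phi_p(1+\beta)\ \le\ 2\phi_p(1)-2\phi_p\!\Big(1+\tfrac{\beta-\alpha}{2}\Big).
\]
With $f(x)=\phi_p(1+x)=(2+x)^{-p/2}$, a convex decreasing function on $[0,\infty)$, and $\beta$ regarded as fixed, this says exactly that $h(\alpha):=f(\alpha)-f(\beta)-2f(0)+2f\!\big(\tfrac{\beta-\alpha}{2}\big)\le 0$ on $[0,\beta]$.

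Finally the one‑variable estimate: $h'(\alpha)=f'(\alpha)-f'\!\big(\tfrac{\beta-\alpha}{2}\big)$, and since $f'$ is increasing and $\alpha\ge\tfrac{\beta-\alpha}{2}$ exactly when $\alpha\ge\beta/3$, the function $h$ decreases on $[0,\beta/3]$ and increases on $[\beta/3,\beta]$, so $\max_{[0,\beta]}h=\max\{h(0),h(\beta)\}$. Now $h(\beta)=0$ and $h(0)=2f(\beta/2)-f(0)-f(\beta)\le 0$ by convexity of $f$, so $h\le 0$, which is the required inequality. I expect this last step to be the only real obstacle: the point is that after invoking the reflection identity the two‑variable problem collapses to a statement about a single convex function, and one must notice that the resulting $h$ is unimodal (a ``valley''), so that checking the two endpoints — where convexity closes it — suffices. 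That unimodality is precisely what the careful affine‑reflection definition of $\Phi_p$ buys; the case split, the bookkeeping of which branch of \eqref{eq:def-Phi} applies, and continuity at $x=1$ are routine.
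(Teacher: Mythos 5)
Your proof is correct. Note that the paper does not prove this lemma at all --- it imports it verbatim as Lemma 33 of \cite{CST} (Lemma 20 of \cite{CKT}) --- so any complete argument here is a bonus. I checked the details: the branch bookkeeping is right ($a_+>1$ together with $a_-+a_+\le 2$ does force $a_-<1$ and $a_+\le 2$, so the lower branch applies at $a_-$ and at the midpoint $m=1-\tfrac{\beta-\alpha}{2}\in(\tfrac12,1]$ and the upper branch at $a_+$); the reduction to $h(\alpha)=f(\alpha)-f(\beta)-2f(0)+2f\bigl(\tfrac{\beta-\alpha}{2}\bigr)\le 0$ on $[0,\beta]$ with $f(x)=(2+x)^{-p/2}$ is an exact algebraic rewriting; and since $f'$ is strictly increasing, $h'(\alpha)=f'(\alpha)-f'\bigl(\tfrac{\beta-\alpha}{2}\bigr)$ changes sign once at $\alpha=\beta/3$, so $h$ is maximised at an endpoint, where $h(\beta)=0$ and $h(0)=2f(\beta/2)-f(0)-f(\beta)\le 0$ by midpoint convexity of $f$. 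For comparison, the proof in the cited sources runs instead through the observation that $\Phi_p'$ is decreasing on $[0,1]$ and symmetric about $x=1$ on $[0,2]$, so that $\int_m^{a_+}\Phi_p'\le\int_{a_-}^{m}\Phi_p'$ pointwise in the reflected variable; your route trades that derivative symmetry for the unimodality of a single auxiliary function $h$, which costs you the small endpoint analysis but keeps everything at the level of convexity of $(2+x)^{-p/2}$. Both are elementary and of comparable length; yours is a perfectly serviceable self-contained substitute for the citation.
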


The heart of the argument boils down to proving inductively on $n$ the following theorem which readily gives \eqref{eq:Cp-z1=1}, thus finishing the whole proof.

\begin{theorem}\label{thm:main-ind}
Let $0 < p < 1$ and $\Phi_p$ be defined in \eqref{eq:def-Phi}. For all $n \geq 2$, $z_2, \dots, z_n \in \C$, we have
\begin{equation}\label{eq:main-ind}
\E\left[\left|\xi_1 + \sum_{j=2}^{n} z_j\xi_j\right|^{-p}\ \right] \leq C_p\Phi_p\left(\sum_{j=2}^n |z_j|^2\right).
 \end{equation}
\end{theorem}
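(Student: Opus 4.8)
The strategy is induction on $n$, following Nazarov--Podkorytov, with the base case $n=2$ handled separately. For $n=2$, inequality \eqref{eq:main-ind} reads $\E|\xi_1 + z_2\xi_2|^{-p} \le C_p\Phi_p(|z_2|^2)$; by rotational invariance this depends only on $x = |z_2|^2$. When $x \ge 1$ we have $\Phi_p(x) = \phi_p(x)$ and this is exactly the equality case of the ``Fourier+H\"older'' step (Corollary \ref{cor:StepI} applied after rescaling so that the dominant coefficient has modulus $\le \tfrac{1}{\sqrt2}$ of the $\ell_2$-norm); more precisely, one checks $\E|\xi_1 + z_2\xi_2|^{-p} = (1+x)^{-p/2}\E|\xi_1 + \xi_2|^{-p}\cdot(\text{something})$ -- actually the cleanest route is: the left side is a function $f(x)$ that by \eqref{eq:neg-mom-formula} equals $\kappa_p\int_0^\infty J_0(t)J_0(t\sqrt x)t^{p-1}\,\dd t$, and one must verify $f(x) \le C_p\Phi_p(x)$ for all $x \ge 0$. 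For $x\ge 1$ this is Corollary \ref{cor:StepI} (with $n=2$, coefficients $1,\sqrt x$, the larger being $\sqrt x \ge 1 \ge \tfrac{1}{\sqrt2}\sqrt{1+x}$ -- wait, that fails), so in fact the $x\ge 1$ range already needs the honest inequality $f(x)\le C_p(1+x)^{-p/2}$, which is the $n=2$ instance of \eqref{eq:thm-Ap-equiv} itself; and for $0\le x\le 1$ we need the \emph{stronger} bound $f(x) \le C_p\bigl(2\phi_p(1)-\phi_p(2-x)\bigr)$. I expect the base case to require its own direct analytic argument about the function $f$ (monotonicity/convexity of $x\mapsto f(x)$, together with the known values $f(1) = C_p$ and the behaviour of $f$ near $x=0$ and $x\to\infty$, using $f(0) = 1$).

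For the inductive step, assume \eqref{eq:main-ind} holds for all sums of $n-1$ complex coefficients and take $z_2,\dots,z_n$. Write $X = \xi_1 + \sum_{j=2}^{n-1} z_j\xi_j$ and isolate the last term: conditioning on $\xi_n$ and using rotational invariance, $\E|X + z_n\xi_n|^{-p} = \E\, g(|X|)$ where $g(r) = \E|r + |z_n|\xi|^{-p} = \E|re^{i\theta} + |z_n|\xi|^{-p}$ does not depend on $\theta$; equivalently I would condition on $X$ and integrate out $\xi_n$. The key device, exactly as in \cite{NP, CKT, CST}, is to symmetrise: set $a_\pm$ so that averaging the inductive bound at the two ``endpoints'' $\sum_{j=2}^{n-1}|z_j|^2 \pm (\text{contribution of }z_n)$ and invoking the extended concavity of $\Phi_p$ (Lemma \ref{lm:extended-convexity}) collapses the two-variable bound into the single value $\Phi_p\bigl(\sum_{j=2}^n |z_j|^2\bigr)$. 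Concretely, one uses that $|X + z_n\xi_n|^{-p}$, averaged over $\xi_n$ against $X$, can be compared to the average of $|X' \pm \text{(real shift)}|^{-p}$-type quantities, reducing the $n$-term problem to two $(n-1)$-term problems with coefficient-square sums $u \pm v$ where $u = \sum_{j=2}^{n-1}|z_j|^2$, $v$ encodes $|z_n|$, and $\tfrac12((u-v)+(u+v)) = u \le$ the relevant threshold so Lemma \ref{lm:extended-convexity} applies.

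The main obstacle -- and the place where \cite{Ko} has its gap -- is making this symmetrisation rigorous for the \emph{continuous} Steinhaus distribution rather than for $\pm1$ signs: for Rademacher variables the conditioning on $\xi_n \in \{-1,+1\}$ literally produces the two endpoint sums, but for $\xi_n$ uniform on $\T$ one gets a continuum of intermediate values, and one must show that the relevant functional $t \mapsto \E|X + t\,\omega|^{-p}$ (or its average) is controlled by the endpoints via a convexity/concavity property compatible with $\Phi_p$. I would handle this using the machinery from \cite{CGT, CKT, CST}: represent $\E|X+z_n\xi_n|^{-p}$ via the Bessel formula \eqref{eq:neg-mom-formula}, so that the $\xi_n$-integration contributes a factor $J_0(t|z_n|)$, and then use that $s\mapsto |z_n|$-dependence enters only through $J_0(t|z_n|) = \E e^{it|z_n|\cos(2\pi U)}$, which \emph{is} an average of the two-point object $\tfrac12(e^{it|z_n|}+e^{-it|z_n|})$-type quantities in a way that, combined with $\Phi_p$'s extended concavity, yields the needed bound. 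I also need to verify carefully that the case distinction from Remark \ref{rem:Fourier-case} (Corollary \ref{cor:StepI}) covers precisely the ``balanced'' regime, so that the induction only has to push through the complementary ``one term dominates'' regime, where $|z_n|$ (say, the largest) exceeds $\tfrac{1}{\sqrt2}$ of the total, making $\sum_{j=2}^{n-1}|z_j|^2$ small and keeping us inside the region $\tfrac12(a_-+a_+)\le 1$ where Lemma \ref{lm:extended-convexity} is valid.
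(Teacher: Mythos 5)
Your high-level plan is the right one (induction, with the balanced regime killed by Corollary~\ref{cor:StepI} and the extended concavity of $\Phi_p$ driving the inductive step), but there is a genuine gap in the inductive step and the base case is only sketched, not proved.

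The inductive step is missing the single algebraic identity that makes the whole scheme work for Steinhaus variables, namely that $(\xi_{n-1},\xi_n)$ has the same law as $(\xi_{n-1},\xi_{n-1}\xi_n)$, so that
\[
z_{n-1}\xi_{n-1}+z_n\xi_n \;\overset{d}{=}\; (z_{n-1}+z_n\xi_n)\,\xi_{n-1}.
\]
This collapses the last \emph{two} summands into a single Steinhaus summand whose coefficient $z_{n-1}+z_n\xi_n$ depends on the conditioned variable $\xi_n$; conditioning on $\xi_n$ then produces a genuine sum of $n-1$ Steinhaus terms to which the inductive hypothesis applies, and the new coefficient-square sum is $x+2\mathrm{Re}(z_{n-1}\overline{z_n\xi_n})$ with $x=\sum_{j=2}^n|z_j|^2$. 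Averaging over $\pm\xi_n$ gives the two endpoints $a_\pm=x\pm2\mathrm{Re}(z_{n-1}\overline{z_n\xi_n})$ with midpoint exactly $x$, and one invokes Lemma~\ref{lm:extended-convexity} in the case $x<1$. Your version pins the last term $z_n\xi_n$ down by conditioning on $\xi_n$, which leaves a sum of $n-1$ Steinhaus variables \emph{plus a constant} — not a Steinhaus sum, so the inductive hypothesis does not apply; and the endpoints you describe, $u\pm v$ with $u=\sum_{j=2}^{n-1}|z_j|^2$, have midpoint $u$ rather than $x$, so the extended concavity would not yield $\Phi_p(x)$. The Fourier/Bessel substitute you propose (writing $J_0(t|z_n|)$ as an average of exponentials) is not what the paper does in the inductive step and, as stated, does not interface with Lemma~\ref{lm:extended-convexity}, which is a statement about the coefficient argument of $\Phi_p$, not about the Fourier side. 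You also omit the easy reduction (Case~(a)) that handles $|z_k|>1$ by rescaling so the largest modulus is $1$, which is what makes the remaining two cases exhaustive.

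The base case is likewise left unexecuted: you correctly notice Corollary~\ref{cor:StepI} does not cover it, but you only say you ``expect'' a direct analytic argument. The paper's argument is concrete and short: for $x=|z|^2\in(0,1)$, expand $\E|\xi_1+\sqrt{x}\xi_2|^{-p}=\sum_{k\ge0}\binom{-p/2}{k}^2x^k$, observe this is increasing in $x$ while $\Phi_p$ is decreasing, and check equality at $x=1$ (the definition of $C_p$); the range $x\ge1$ then follows by homogeneity and $\phi_p\le\Phi_p$. You should supply some such argument rather than postponing it.
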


\subsection{The technical part: Proof of Lemma \ref{lm:Psi<Psi(2)}}

Define 
\[
F_p(s) = \int_0^\infty \abs{J_0(t)}^st^{p-1} \dd t,
\]
so that $\Psi_p(s) = s^{p/2}F_p(s)$. We begin with bounding $F_p$ by splitting the integration into two regimes, where one has rather accurate pointwise bounds on $J_0$. With hindsight of the ensuing calculations, we set
\begin{equation}\label{eq:def-Up}
U_p(s) = 2^{p-1}s^{-p/2} \left(\Gamma\left(\frac{p}{2}\right) - \frac{\Gamma(\frac{p}{2}+2)}{4s} + \frac{\Gamma(\frac{p}{2}+4)}{32s^2}\right) + \frac{2.59^p (1.295\pi)^{-s/2}}{\frac{s}{2}-p},
\end{equation}
for $s > 2p$.

\begin{lemma}\label{lm:Fp-Up}
For every $0 < p < 1$ and $s > 2p$, we have
\[ 
F_p(s) \leq U_p(s).
 \]
\end{lemma}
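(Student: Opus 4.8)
\textbf{Proof plan for Lemma \ref{lm:Fp-Up}.}

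The plan is to split the integral $F_p(s) = \int_0^\infty |J_0(t)|^s t^{p-1}\,\dd t$ at a well-chosen threshold $t_0$ — judging from the constants $1.295\pi$ and $2.59 = 2 \cdot 1.295$ appearing in $U_p$, the natural choice is $t_0 = 1.295\pi$ (roughly the location of the third zero of $J_0$, or thereabouts, where the asymptotic regime has genuinely kicked in). On the \emph{near} part $[0, t_0]$ I would use the power-series/polynomial upper bound on $J_0$ near the origin: since $J_0(t) = 1 - t^2/4 + t^4/64 - \dots$, one has a clean bound of the form $|J_0(t)| \le 1 - t^2/4 + t^4/64$ (or $\log|J_0(t)| \le -t^2/4 + (\text{something})t^4$), hence $|J_0(t)|^s \le \exp(-st^2/4 + \dots)$ and one controls $\int_0^{t_0} |J_0(t)|^s t^{p-1}\,\dd t$ by a Gaussian-type integral, extending it up to $+\infty$. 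Expanding $e^{-st^2/4}$-type integrals against $t^{p-1}$ produces Gamma values $\Gamma(p/2)$, $\Gamma(p/2+2)$, $\Gamma(p/2+4)$ after the substitution $u = st^2/4$, which is exactly the structure of the first bracketed term $2^{p-1}s^{-p/2}(\Gamma(p/2) - \frac{\Gamma(p/2+2)}{4s} + \frac{\Gamma(p/2+4)}{32s^2})$ in \eqref{eq:def-Up}. The alternating-sign truncation of the series for $J_0$ (and of $e^x$) needs to be arranged so that each discarded term has the right sign to give an upper bound; this is where some care with remainder estimates is required.

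On the \emph{far} part $[t_0, \infty)$ I would invoke the pointwise decay bound \eqref{eq:J0-up-bd1}, namely $|J_0(t)| \le \sqrt{2/(\pi t)}$, so that $|J_0(t)|^s \le (2/(\pi t))^{s/2}$ and
\[
\int_{t_0}^\infty |J_0(t)|^s t^{p-1}\,\dd t \le \left(\frac{2}{\pi}\right)^{s/2}\int_{t_0}^\infty t^{p-1-s/2}\,\dd t = \left(\frac{2}{\pi}\right)^{s/2}\frac{t_0^{\,p-s/2}}{s/2 - p},
\]
valid since $s > 2p$. With $t_0 = 1.295\pi$ this is $\frac{(2/\pi)^{s/2}(1.295\pi)^p(1.295\pi)^{-s/2}}{s/2-p} = \frac{(1.295\pi)^p (2/\pi \cdot 1.295\pi)^{-s/2}\cdot(\text{const})}{s/2-p}$; collecting powers, $(2/\pi)^{s/2}(1.295\pi)^{-s/2} = (2\cdot 1.295)^{-s/2} = 2.59^{-s/2}$, wait — one instead gets $(2/\pi)^{s/2}(1.295\pi)^{-s/2} = (2/(1.295\pi^2))^{s/2}$, so the bookkeeping must be redone so the tail matches $\frac{2.59^p(1.295\pi)^{-s/2}}{s/2-p}$; this just pins down the precise threshold and shows $2.59 = 2\cdot 1.295$ enters as $t_0^{\,p}$ while $(1.295\pi)^{-s/2}$ comes from combining $t_0^{-s/2}$ with the $(2/\pi)^{s/2}$ factor absorbed appropriately. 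The main point is that the far integral is elementary once the decay bound is in hand.

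The main obstacle, I expect, is the \emph{near} part: one must produce a genuinely valid pointwise upper bound on $|J_0(t)|^s$ on all of $[0, t_0]$ that (i) is of Gaussian-times-polynomial-corrections type so the integral evaluates to the stated Gamma combination, and (ii) is actually an upper bound for \emph{every} $s > 2p$ and $0<p<1$, including where $J_0$ is small or negative near $t_0 \approx 4.07$ — note $t_0$ here is past the first zero of $J_0$ at $\approx 2.405$, so $|J_0(t)|$ dips to $0$ and the crude series bound $1 - t^2/4 + t^4/64$ is not even correct for $|J_0|$ on that whole range. So more likely the split is at a smaller $t_0$ (the first positive zero region, or a point where the series bound is still safely valid), and the $(1.295\pi)$ in the formula reflects a different normalization; I would reconcile this by choosing $t_0$ to be exactly where the two competing bounds cross, then verifying numerically/analytically that $1 - t^2/4 + t^4/64$ (or its exponential surrogate) dominates $|J_0(t)|$ up to that point and that $\sqrt{2/(\pi t)}$ dominates beyond it. Establishing these two monotone-comparison facts rigorously — presumably via sign analysis of the relevant power series or via known bounds on $J_0$ from \cite{Sze} — is the technical crux; everything after that is substitution, $u = st^2/4$, and recognizing Gamma integrals.
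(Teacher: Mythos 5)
Your plan is structurally the same as the paper's proof: split $F_p(s)$ at the threshold $t_0 = 2.59$, use the decay bound $|J_0(t)| \le \sqrt{2/(\pi t)}$ on $(t_0,\infty)$ (and indeed $(2/\pi)^{s/2}\,2.59^{-s/2} = (1.295\pi)^{-s/2}$ and $2.59^p$ survive, giving the second term of $U_p$ exactly as you worked out), and a Gaussian-type pointwise bound on $|J_0|$ near the origin, extended to $+\infty$ and reduced to Gamma values by the substitution $u = st^2/4$. You also correctly flag the technical crux: one needs a pointwise bound on $|J_0(t)|$ valid on all of $(0,2.59)$, including past the first zero at $\approx 2.405$.

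Where the plan has a genuine gap is precisely at that crux, and your leading candidate would not work. The paper invokes the bound $|J_0(t)| \le \exp\left(-\tfrac{t^2}{4}-\tfrac{t^4}{64}\right)$ for $0 < t < 2.59$ (Proposition~12 in \cite{KK}; note the minus sign on the $t^4$ term). Your polynomial alternative $|J_0(t)| \le 1 - t^2/4 + t^4/64 = (1-t^2/8)^2$ fails on the right end of the near range: it has a double root at $t=2\sqrt 2\approx 2.83$, so at $t=2.59$ it equals $\approx 0.026$, while $|J_0(2.59)| \approx 0.07$. The exponential form is essential because it stays bounded away from zero throughout $(0,2.59)$. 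Once that bound is in hand, the Gamma triple $\Gamma(p/2), \Gamma(p/2+2), \Gamma(p/2+4)$ does not drop out directly from a single series expansion: after the substitution $u = st^2/4$ the integrand becomes $e^{-u}e^{-u^2/(4s)}u^{p/2-1}$, and one needs the second, independent truncation $e^{-x}\le 1-x+\tfrac{x^2}{2}$ applied to $x = u^2/(4s)$ to get the three-term bracket in $U_p$ — so the sign bookkeeping you anticipated happens in two separate places, not one.
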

\begin{proof}
Using the bound
\[ 
\abs{J_0(t)} \leq \exp\left(-\frac{t^2}{4}-\frac{t^4}{64}\right), \qquad 0 < t < 2.59
 \]
(Proposition 12 in \cite{KK}) near the origin and \eqref{eq:J0-up-bd1} for the remaining range, we obtain
\[
F_p(s) \leq \int_0^{\infty} \exp\left(-\frac{st^2}{4}-\frac{st^4}{64}\right) t^{p-1}\dd t + \int_{2.59}^\infty \left(\sqrt{\frac{2}{\pi t}}\right)^s t^{p-1}\dd t
 \]
The second integral evaluates to the second term in the definition of $U_p$. For the first integral, using the substitution $u=\frac{t^2}{4}$ and the bound $e^{-x}\leq 1-x+\frac{x^2}{2}$, we get
\begin{align*}
    \int_0^\infty \exp\left(-\frac{st^2}{4}-\frac{st^4}{64}\right) t^{p-1}\dd t
    &= \int_0^\infty \exp\left(-u-\frac{u^2}{4s}\right)\left(\frac{4u}{s}\right)^{p/2-1}\frac{2}{s} \dd u\\
    &= 2^{p-1}s^{-p/2} \int_0^\infty e^{-\frac{u^2}{4s}}e^{-u}u^{\frac{p}{2}-1} \dd u\\
    &\leq 2^{p-1}s^{-p/2}\int_0^\infty \left(1-\frac{u^2}{4s}+\frac{u^4}{32s^2}\right) e^{-u} u^{\frac{p}{2}-1}\dd u\\
    &= 2^{p-1}s^{-p/2} \left(\Gamma(\frac{p}{2}) - \frac{\Gamma(\frac{p}{2}+2)}{4s} + \frac{\Gamma(\frac{p}{2}+4)}{32s^2}\right).\qedhere
\end{align*}
\end{proof}

Since the desired inequality $s^{p/2}F_p(s) = \Psi_p(s) \leq \Psi_p(2)$ is clearly equivalent to $F_p(s) \leq s^{-p/2}\Psi_p(2)$, we let
\begin{equation}\label{eq:def-Gp}
G_p(s) = s^{-p/2} \Psi_p(2) = 2^{3p/2-1}s^{-p/2} \Gamma(1-p) \Gamma(p/2) \Gamma(1-p/2)^{-3},
\end{equation}
recalling \eqref{eq:Psi(2)}.
For the sequel, we note at this point also that
\begin{equation}\label{eq:Fp(2)=Gp(2)}
F_p(2) = G_p(2)
 \end{equation}
(since there is equality at $s = 2$).

Continuing to follow the approach from \cite{CST} and \cite{OP}, we establish $U_p(s) \leq G_p(s)$ in as wide range of parameters as possible; alas, given $0 < p < 1$, this does not hold for all $s \geq 2$, but for \emph{all} $s$ sufficiently large, $s \geq 3$. To overcome this deficiency (ultimately of the bound $F_p \leq U_p$), we show a stronger than needed bound at $s=3$ and then interpolate it to \emph{all} $2 \leq s \leq 3$, leveraging H\"older's inequality, i.e. the log-convexity of $F_p(s)$. These steps are the content of the next two lemmas.

\begin{lemma}\label{lm:Up-Gp}
For every $0 < p < 1$ and $s \geq 3$, we have
$
U_p(s) \leq G_p(s).
$
\end{lemma}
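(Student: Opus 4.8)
The plan is to strip the common factors from both sides, reduce to controlling a single small exponential tail term, and then verify the resulting inequality — which is genuinely tight only in one corner of the $(p,s)$–domain — by a convexity argument concentrated there. Concretely, I would divide $U_p(s)\le G_p(s)$ by the common positive factor $2^{p-1}s^{-p/2}\Gamma(p/2)$. Writing $q=p/2\in(0,\tfrac12)$ and using $\Gamma(\tfrac p2+2)=q(q+1)\Gamma(\tfrac p2)$, $\Gamma(\tfrac p2+4)=q(q+1)(q+2)(q+3)\Gamma(\tfrac p2)$, $\tfrac{2.59^{p}}{2^{p-1}}=2\cdot1.295^{2q}$, $\tfrac1{\Gamma(q)}=\tfrac q{\Gamma(1+q)}$, and recalling (from \eqref{eq:def-Gp}) that $G_p(s)=2^{p-1}s^{-p/2}\Gamma(\tfrac p2)\,r_p$ with $r_p:=2^{q}\Gamma(1-2q)\Gamma(1-q)^{-3}$, the claim becomes: for $q\in(0,\tfrac12)$ and $s\ge3$,
\[
E(q,s):=\frac{2\cdot1.295^{2q}\,s^{q}\,(1.295\pi)^{-s/2}}{(\tfrac s2-2q)\,\Gamma(q)}\ \le\ (r_p-1)+\frac{q(q+1)}{4s}-\frac{q(q+1)(q+2)(q+3)}{32s^2}=:D(q,s).
\]
One first records that $M(q,s):=1-\tfrac{q(q+1)}{4s}+\tfrac{q(q+1)(q+2)(q+3)}{32s^2}=1-\tfrac{q(q+1)}{4s}\bigl(1-\tfrac{(q+2)(q+3)}{8s}\bigr)\in(0,1)$ for $s\ge3$, since $(q+2)(q+3)\le\tfrac{35}{4}<8s$; thus the whole game is whether the exponential tail $E$ fits inside the gap $D=r_p-M$.

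The conceptual core is a usable lower bound for $r_p-1$, and here I would use $\log\Gamma(1+x)=-\gamma x+\sum_{k\ge2}\tfrac{(-1)^k\zeta(k)}{k}x^k$ (valid for $|x|<1$, hence for $x=-2q$ and $x=-q$ on all of $q\in[0,\tfrac12)$), which gives
\[
\log r_p=(\log2-\gamma)\,q+\sum_{k\ge2}\frac{(2^{k}-3)\,\zeta(k)}{k}\,q^{k}.
\]
Every coefficient is positive (as $2^{k}-3\ge1$ and $\zeta(k)>0$), so $\log r_p$ is increasing and convex on $[0,\tfrac12)$; in particular $r_p>1$, $r_p-1\ge\log r_p\ge(\log2-\gamma)q$, and $\tfrac{r_p-1}{q}\ge\tfrac{\log r_p}{q}=(\log2-\gamma)+\tfrac{\zeta(2)}{2}q+\cdots$ is convex, so its tangent at $0$ (or any truncation) supplies clean linear lower bounds for the right side of the displayed inequality.

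The remaining verification splits by the size of $s$. For large $s$ (say $s\ge6$) I would bound $E$ crudely — $1.295^{2q}\le1.295$, $s^{q}\le s^{1/2}$, $\tfrac1{s/2-2q}\le\tfrac1{s/2-1}$, $\tfrac1{\Gamma(q)}=\tfrac q{\Gamma(1+q)}\le q/\min_{[1,3/2]}\Gamma$ — against $D\ge r_p-1\ge(\log2-\gamma)q$, which collapses everything to the one–variable inequality $c\,s^{1/2}/(\tfrac s2-1)\le(1.295\pi)^{s/2}$, checked at $s=6$ and propagated to $s\ge6$ by monotonicity. For $3\le s\le6$ I would use that both $E(q,\cdot)$ and $D(q,\cdot)$ are decreasing in $s$ (the former because $\partial_s\log E=\tfrac qs-\tfrac12\log(1.295\pi)-\tfrac1{s-4q}<0$, the latter because $\partial_sD=\tfrac{q(q+1)}{4s^2}\bigl(\tfrac{(q+2)(q+3)}{4s}-1\bigr)<0$): on a subinterval $[a,b]$ one has $E(q,s)\le E(q,a)$ and $D(q,s)\ge D(q,b)$, so it suffices to prove finitely many endpoint comparisons in $q$ alone, e.g. $E(q,3)\le D(q,\tfrac72)$ and $E(q,\tfrac72)\le D(q,6)$.

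The main obstacle is exactly these last $q$-inequalities near $q=0$: at $q\to0^{+}$ with $s=3$ the two sides meet (the coefficients of $q$ differ by only $\approx(\log2-\gamma)-0.10\approx0.01$), so no crude estimate survives there. The fix I have in mind is to divide through by $q$, so the left side becomes $2(1.295\pi)^{-a/2}e^{g(q)}$ with $g(q)=q\log(1.295^2a)-\log(\tfrac a2-2q)-\log\Gamma(1+q)$ explicit and $g'$ easily bounded above on a short $q$-subinterval, hence dominated there by an exponential majorant $L(0)e^{Mq}$; and to bound the right side below by a linear function of $q$ (the tangent at the subinterval's left endpoint of the convex $\tfrac{\log r_p}{q}$, plus a chord bound for the remaining concave cubic $\tfrac{q+1}{4b}-\tfrac{(q+1)(q+2)(q+3)}{32b^2}$). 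The difference [exponential majorant]$-$[linear minorant] is convex, so negativity on the subinterval follows from its two endpoint values; a finite partition of $[0,\tfrac12)$ in $q$ — finer near $0$, and retaining more Taylor terms of $\log r_p$ away from $0$ where there is ample room — finishes it. Everything past the normalization and the power-series identity is routine but bookkeeping-heavy, and the genuinely delicate point is the thin-margin corner $s=3$, $q\to0$.
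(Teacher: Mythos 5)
Your proposal takes a genuinely different route from the paper, and the most important difference is one you \emph{do not} make: after clearing the common factor $2^{p-1}s^{-p/2}\Gamma(p/2)$, the paper multiplies both sides by an additional $s^2$ (i.e.\ works with $2^{1-p}s^{p/2+2}U_p$ versus $2^{1-p}s^{p/2+2}G_p$ directly). That extra $s^2$ is the whole game: the resulting right-hand side $s^2\Gamma(p/2)(D(p)-1)+\tfrac{\Gamma(p/2+2)}{4}s-\tfrac{\Gamma(p/2+4)}{32}$ is a quadratic in $s$ with positive leading and linear coefficients (using $D(p)>1$), hence \emph{increasing} on $s\geq 3$, while the left-hand side is shown to be \emph{decreasing}, so the entire family of inequalities collapses to the single case $s=3$, after which a one-variable tangent-line argument in $p$ finishes it. Your normalization leaves a form $E(q,s)\le D(q,s)$ in which, as you correctly compute, \emph{both} sides are decreasing in $s$. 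Because the monotonicities now run the same way, you are forced into a two-parameter partitioning scheme — subintervals in $s$ with endpoint comparisons $E(q,a_i)\le D(q,a_{i+1})$, each of which is \emph{stronger} than the original inequality and so has a strictly smaller margin, plus a further partition in $q$ to handle the convexity checks. That is a much heavier verification than the paper's single $s=3$ check, and it sits dangerously close to the edge: at $(s,q)=(3,0)$ the true margin is only about $0.016\,q$ (your stated $q$-coefficient of $E$ at $s=3$, $\approx 0.10$, should be $\tfrac{4}{3}(1.295\pi)^{-3/2}\approx 0.16$, and the $q$-coefficient of $D$ is $(\log 2-\gamma)+\tfrac{1}{12}-\tfrac{3}{144}\approx 0.18$, not just $\log 2-\gamma$; the margin you quote happens to be roughly right for the wrong reasons), and the endpoint strengthening $E(q,3)\le D(q,\tfrac72)$ eats about $0.006\,q$ of it. Feasible, but tight — and you have not actually carried out these endpoint numerics, so the proof is a plan, not a verification. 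On the plus side, your power-series argument for the positivity, monotonicity and convexity of $\log r_p$ — expanding $\log\Gamma(1+x)=-\gamma x+\sum_{k\ge2}\tfrac{(-1)^k\zeta(k)}{k}x^k$ at $x=-2q$ and $x=-q$ to get $\log r_p=(\log 2-\gamma)q+\sum_{k\ge2}\tfrac{(2^k-3)\zeta(k)}{k}q^k$ with all coefficients manifestly positive — is a clean and self-contained alternative to the paper's Lemma~\ref{lm:D(p)}, which instead bounds $(\log\Gamma)''$ via the trigamma series. If you adopt the paper's $s^{p/2+2}$ normalization (so that the reduction to $s=3$ is immediate) but keep your power-series proof of the convexity of $\log D(p)$, you would get the cleanest version of this argument.
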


\begin{lemma}\label{lm:Fp(3)}
For every $0 < p < 1$, we have $F_p(3) \leq e^{-p/4}G_p(2)$. 
\end{lemma}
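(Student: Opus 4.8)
The plan is to bound $F_p(3)$ from above by an explicit function of $p$ and to compare it with $e^{-p/4}G_p(2)$, which by \eqref{eq:def-Gp} is an explicit product of Gamma functions and, by \eqref{eq:Fp(2)=Gp(2)}, equals $e^{-p/4}F_p(2)$. It helps to record the equivalent form of the claim: since $F_p(2)=G_p(2)$, the inequality $F_p(3)\le e^{-p/4}F_p(2)$ rearranges to
\[
\int_0^\infty\bigl(e^{-p/4}-|J_0(t)|\bigr)J_0(t)^2\,t^{p-1}\,\dd t\ \ge\ 0,
\]
so what must be shown is that the negative contribution from the short interval $\{t:|J_0(t)|>e^{-p/4}\}$ near the origin is outweighed by the positive contribution from the complementary range.

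For the upper bound on $F_p(3)$ I would split $[0,\infty)$ as in the proof of Lemma~\ref{lm:Fp-Up}: near the origin use $|J_0(t)|\le\exp(-t^2/4-t^4/64)$ (Proposition~12 in \cite{KK}), and on the remaining range Sonin's bound \eqref{eq:J0-up-bd1}. The bound $F_p(3)\le U_p(3)$ straight from Lemma~\ref{lm:Fp-Up} is, however, \emph{not} strong enough as $p\to0$: combined with Lemma~\ref{lm:Up-Gp} it only gives $F_p(3)\le U_p(3)\le G_p(3)=(2/3)^{p/2}G_p(2)$, while $e^{-p/4}<(2/3)^{p/2}$ for $p>0$. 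The loss sits almost entirely in the tail $\int_{2.59}^{\infty}|J_0(t)|^{3}t^{p-1}\,\dd t$, where the envelope $|J_0(t)|^3\le(2/\pi t)^{3/2}$ is crude, so I would sharpen it: on $t\ge j_{0,1}$ (the first positive zero of $J_0$) one may first use $|J_0(t)|\le|J_0(j_{1,1})|<0.41$ (the first, and largest, local extremum of $J_0$ past its first zero), and, further out, replace the envelope by an oscillation-averaged bound (morally $|J_0(t)|^3$ averages to $\tfrac{4}{3\pi}(2/\pi t)^{3/2}$; rigorously via an integration by parts against $t^{p-1}$ that gains a power of $t$, or using $\tfrac{\dd}{\dd t}\bigl(t(J_0^2+J_1^2)\bigr)=J_0^2-J_1^2$). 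A little extra room near the origin, if needed, comes from using the next even Taylor truncation $e^{-x}\le1-x+\tfrac{x^2}{2}-\tfrac{x^3}{6}+\tfrac{x^4}{24}$ in place of $1-x+\tfrac{x^2}{2}$. The outcome is an explicit upper bound $\widetilde U_p(3)$ for $F_p(3)$, and the lemma reduces to the one-variable inequality $\widetilde U_p(3)\le e^{-p/4}G_p(2)$ on $(0,1)$.

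Verifying that inequality is the step I expect to be the main obstacle, since the margin is genuinely thin. For $p$ bounded away from $0$ it is comfortable: as $p\to1^-$ the right-hand side blows up (because $\Gamma(1-p)\to\infty$ in \eqref{eq:def-Gp}) while $F_p(3)$ stays bounded, so crude estimates close a range $[p_0,1)$. The difficulty is $p\to0^+$: both sides diverge like $1/p$, and writing $F_p(s)=\tfrac1p+H(s)+o(1)$ (split the integral at $t=1$ and pull out $\int_0^1 t^{p-1}\dd t$) with
\[
H(s)=\int_0^1\frac{|J_0(t)|^s-1}{t}\,\dd t+\int_1^\infty\frac{|J_0(t)|^s}{t}\,\dd t,
\]
the desired inequality collapses in the limit to
\[
\int_0^\infty\frac{J_0(t)^2\bigl(1-|J_0(t)|\bigr)}{t}\,\dd t\ \ge\ \tfrac14,
\]
which holds, but only barely (the left-hand side is $\approx0.27$). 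So the explicit estimates must be tight enough to preserve this slack; I would treat the near-origin regime by hand --- with $t_0(p)\sim\sqrt p$ for the threshold $J_0(t_0(p))=e^{-p/4}$ and the expansion $1-J_0(t)=\tfrac{t^2}{4}-\tfrac{t^4}{64}+\cdots$ to control the negative part precisely --- and then finish on all of $(0,1)$ by combining this with the cruder bounds on the rest (or with a monotonicity / finite-checkpoint argument for the ratio $\widetilde U_p(3)\big/\bigl(e^{-p/4}G_p(2)\bigr)$).
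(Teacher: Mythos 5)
Your setup is sound and your diagnosis of where the difficulty lies is exactly right: the reduction to $F_p(3)\le e^{-p/4}F_p(2)$, the observation that $U_p(3)\le G_p(3)=(2/3)^{p/2}G_p(2)$ from Lemmas \ref{lm:Fp-Up} and \ref{lm:Up-Gp} falls short because $e^{-p/4}<(2/3)^{p/2}$, and the limiting inequality $\int_0^\infty t^{-1}J_0(t)^2(1-|J_0(t)|)\,\dd t\ge\tfrac14$ as $p\to0^+$ (with only about $0.02$ of slack) are all correct and match the actual structure of the problem. But the proposal stops at the point where the lemma's entire content lives: you never produce a concrete $\widetilde U_p(3)$ and never verify $\widetilde U_p(3)\le e^{-p/4}G_p(2)$ on $(0,1)$. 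Worse, at least one of the concrete sharpenings you name is quantifiably insufficient. Bounding $|J_0|\le 0.41$ past the first zero and keeping one Sonin factor gives, at $p\to0^+$, a tail contribution of roughly $0.41\cdot\frac{2}{\pi}\cdot\frac{1}{2.4}\approx 0.11$ for $\int_{2.4}^\infty|J_0|^3t^{-1}\,\dd t$, versus a true value near $0.04$; that single loss of about $0.07$ already swamps the total margin of $0.02$. So the argument genuinely hinges on the oscillation-averaged tail bound, which you only gesture at (``rigorously via an integration by parts \dots or using $\frac{\dd}{\dd t}(t(J_0^2+J_1^2))=J_0^2-J_1^2$''); making that rigorous for $|J_0|^3$ (which is not smooth at the zeros of $J_0$) and with explicit constants is a nontrivial piece of work that is missing.

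For comparison, the paper accepts that no clean analytic envelope is sharp enough on the middle range and instead brute-forces it: it splits $(0,\infty)$ at $t=1,3,12$, uses the Gaussian-type bound from \cite{KK} only on $(0,1)$, replaces $|J_0|^3$ on $(1,3)$ by a piecewise-constant majorant on $200$ subintervals (using the monotonicity of $|J_0|$ between its first zero and first subsequent extremum), uses midpoint Riemann sums with a derivative-based error term on $(3,12)$, and only applies Sonin's bound on $(12,\infty)$, where it is finally accurate enough. The resulting explicit bound $L(p)=p\sum_j B_j(p)$ is convex, and the comparison with $R(p)=pe^{-p/4}G_p(2)$ is finished by tangent lines of $R$ checked at six nodes, with margins as small as $1.7\times10^{-4}$. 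Your plan could in principle be completed along similar lines, but as written it is a roadmap with an unexecuted (and, in one branch, unworkable) decisive step, not a proof.
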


Taking the lemmas for granted, we can finish the proof.

\begin{proof}[End of proof of Lemma \ref{lm:Psi<Psi(2)}]
In view of Lemma \ref{lm:Up-Gp}, we need only consider $2 \leq s \leq 3$. We let $\lambda = 3-s \in [0,1]$ and observe that $s = 2\lambda + 3(1-\lambda)$. H\"older's inequality therefore yields
\[ 
F_p(s) = \int_0^\infty \abs{J_0(t)}^{2\lambda + 3(1-\lambda)}t^{p-1} \dd t  \leq F_p(2)^\lambda F_p(3)^{1-\lambda} = G_p(2)^\lambda F_p(3)^{1-\lambda},
 \]
where we have used \eqref{eq:Fp(2)=Gp(2)}. Finally, thanks to Lemma \ref{lm:Fp(3)},
\[ 
F_p(s) \leq G_p(2)^{\lambda} (e^{-p/4}G_p(2))^{1-\lambda} = G_p(2) e^{-p(s-2)/4}.
 \]
Moreover, $e^{-p(s-2)/4} \leq s^{-p/2}2^{p/2}$ (by the concavity of $\log$, see (31) in \cite{CST}), thus
\[ 
\Psi_p(s) = s^{p/2}F_p(s) \leq 2^{p/2}G_p(2) = \Psi_p(2). \qedhere
 \]
\end{proof}

It remains to prove Lemmas \ref{lm:Up-Gp} and \ref{lm:Fp(3)} which occupies the rest of this section (the latter being perhaps the most technical part of this paper).

\begin{proof}[Proof of Lemma \ref{lm:Up-Gp}]
Recalling the definitions of $U_p$ and $G_p$ from \eqref{eq:def-Up} and \eqref{eq:def-Gp}, inequality $U_p \leq G_p$ is equivalent to
\begin{align*}
\frac{(2.59)^p (1.295\pi)^{-s/2}}{\frac{s}{2}-p} &\leq 2^{3p/2-1}s^{-p/2} \Gamma(p/2)\frac{\Gamma(1-p)}{\Gamma(1-p/2)^{3}} \\
&\qquad - 2^{p-1}s^{-p/2} \left(\Gamma\left(\frac{p}{2}\right) - \frac{\Gamma(\frac{p}{2}+2)}{4s} + \frac{\Gamma(\frac{p}{2}+4)}{32s^2}\right).
 \end{align*}
For the ensuing calculations, we set
\[ 
b_0 = 1.295
 \]
and introduce the function
\begin{equation}\label{eq:def-Dp}
D(p) = 2^{p/2}\frac{\Gamma(1-p)}{\Gamma(1-p/2)^{3}}, \qquad p \in (0,1).
\end{equation}
We shall need the following observation, deferring its proof until the main argument has been finished.
\begin{lemma}\label{lm:D(p)}
Function $(0,1) \ni p \mapsto \log D(p)$ is convex, increasing and positive.
\end{lemma}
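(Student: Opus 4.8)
The plan is to differentiate $\log D$ twice and read off all three properties from elementary facts about the digamma function $\psi = (\log\Gamma)'$ and the trigamma function $\psi' = (\log\Gamma)''$. Writing $\log D(p) = \tfrac{p}{2}\log 2 + \log\Gamma(1-p) - 3\log\Gamma(1-\tfrac p2)$ and using $\tfrac{\dd}{\dd p}\log\Gamma(1-cp) = -c\,\psi(1-cp)$, one finds
\[
(\log D)'(p) = \frac{\log 2}{2} - \psi(1-p) + \frac32\,\psi\!\left(1-\tfrac p2\right),
\qquad
(\log D)''(p) = \psi'(1-p) - \frac34\,\psi'\!\left(1-\tfrac p2\right).
\]

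For convexity, the key observation is that for $p\in(0,1)$ one has $0 < 1-p < 1-\tfrac p2 < 1$, and the trigamma function $\psi'(x) = \sum_{n\geq 0}(x+n)^{-2}$ is positive and strictly decreasing on $(0,\infty)$; hence $\psi'(1-p) > \psi'\!\left(1-\tfrac p2\right) > \tfrac34\,\psi'\!\left(1-\tfrac p2\right)$, so $(\log D)'' > 0$ on $(0,1)$. No delicate estimate is needed here: the slack factor $\tfrac34 < 1$ does all the work, which is the one mildly pleasant point of the argument.

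Granting convexity, $(\log D)'$ is strictly increasing on $(0,1)$, so it suffices to control its limit as $p\to 0^+$. Since $\psi(1) = -\gamma$, we get $(\log D)'(0^+) = \tfrac12\log 2 + \tfrac12\psi(1) = \tfrac12(\log 2 - \gamma) > 0$, the only numerical input being the standard inequality $\gamma \approx 0.577 < 0.693 \approx \log 2$. Hence $(\log D)' > 0$ throughout $(0,1)$, i.e. $\log D$ is increasing. Finally, $\lim_{p\to 0^+}\log D(p) = \log\!\big(2^{0}\,\Gamma(1)/\Gamma(1)^{3}\big) = 0$, so the monotonicity just established gives $\log D(p) > 0$ for all $p\in(0,1)$, which finishes the proof.

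I do not anticipate a genuine obstacle: everything reduces to the monotonicity and positivity of $\psi'$ and a single elementary constant comparison. If one preferred not to cite the numerical value of $\gamma$, one could instead bound $(\log D)'(0^+) = \tfrac12(\log 2 - \gamma)$ from below using a fast-converging classical series for $\gamma$, but invoking $\gamma < \log 2$ directly seems cleanest.
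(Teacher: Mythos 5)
Your proof is correct, and it is genuinely cleaner than the one in the paper. The paper first substitutes $x=\tfrac{1-p}{2}$ and invokes the Legendre duplication formula to rewrite $D(p) = \tfrac{2^{x-1/2}\Gamma(x)}{\sqrt{\pi}\,\Gamma(x+\frac12)^2}$, reducing convexity to showing $f''(x)=\psi'(x)-2\psi'\!\left(x+\tfrac12\right)>0$ on $(0,\tfrac12)$. That factor of $2$ sits on the wrong side of the monotonicity of $\psi'$, so the paper is forced into term-by-term estimates of the series $\psi'(z)=\sum_{n\ge 0}(z+n)^{-2}$, bounding the tail by its value at $x=\tfrac12$ and then checking a residual one-variable inequality. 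You avoid the change of variables and the duplication formula entirely, differentiating $\log D$ directly in $p$; the coefficient in $(\log D)''(p)=\psi'(1-p)-\tfrac34\psi'\!\left(1-\tfrac p2\right)$ is now $\tfrac34<1$, so positivity is immediate from $1-p<1-\tfrac p2$ and the strict monotonicity of $\psi'$, with no estimates needed. (The two expressions for $(\log D)''$ are of course equal, related by the trigamma duplication identity $4\psi'(2x)=\psi'(x)+\psi'(x+\tfrac12)$; the point is that the paper's rewrite pushes the coefficient to the unhelpful side.) Your derivation of monotonicity from $(\log D)'(0^+)=\tfrac12(\log 2-\gamma)>0$ plus convexity, and of positivity from $\log D(0^+)=0$ plus monotonicity, matches the paper.
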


After multiplying both sides of the previous inequality by $2^{1-p}s^{p/2+2}$ and performing simple algebraic manipulations, it becomes equivalent to
\[ 
\frac{2b_0^p(b_0\pi)^{-s/2}}{\frac{s}{2}-p}s^{p/2+2}\leq s^2\Gamma\left(\frac{p}{2}\right)(D(p)-1) + \Gamma\left(\frac{p}{2}+2\right)\frac{8s-(\frac{p}{2}+2)(\frac{p}{2}+3)}{32}.
 \]

\textbf{Claim.} For every fixed $0 < p < 1$, the left hand side is decreasing in $s \in (3,+\infty)$, whilst the right hand side is increasing in $s$. In particular, it suffices to prove this inequality for $s=3$.

Indeed, for the right hand side, it suffices to note that $D(p) > 1$ (Lemma \ref{lm:D(p)}). For the left hand side, we crudely bound the derivative,
\begin{align*}
\frac{\dd}{\dd s}\log\left( \frac{2b_0^p(b_0\pi)^{-s/2}}{\frac{s}{2}-p}s^{p/2+2} \right) &= -\frac12\log(b_0\pi) + \left(\frac{p}{2}+2\right)\frac{1}{s} - \frac{1}{s-2p} \\
&< -\frac12\log(b_0\pi) + \left(\frac{1}{2}+2\right)\frac{1}{s} - \frac{1}{s} =  -\frac12\log(b_0\pi)  + \frac{3}{2s}
\end{align*}
which is plainly negative for $s > 3$. 

Plugging in $s=3$, and using the fact that $\Gamma(x)\geq 0.88$, $x > 0$ (see \cite{DC}), and $\Gamma(\frac{p}{2}+2) \geq \Gamma(2) = 1$ ($\Gamma(x)$ is increasing for $x> \frac32$), it suffices to prove that the following function is positive for $0 < p < 1$, 
\[
f(p) = 7.92(D(p)-1) + \frac{24-(\frac{p}{2}+2)(\frac{p}{2}+3)}{32}-\frac{2b_0^p(b_0\pi)^{-3/2}}{\frac{3}{2}-p} = R(p) - L(p),
\]
where we have let
\[
R(p) = 7.92(D(p)-1)+\frac{3}{4}, \qquad L(p) = \frac{(\frac{p}{2}+2)(\frac{p}{2}+3)}{32}+\frac{2b_0^p(b_0\pi)^{-3/2}}{\frac{3}{2}-p}.
\]
Observe that $L(p)$ and $R(p)$ are both convex (recall Lemma \ref{lm:D(p)}). 

Take the tangent $l_0(p) = R(0)+pR'(0)$ as a lower bound of $R(p)$. We then numerically confirm that $l_0$ dominates $L$ at $p=0$ and $p=1$ (the difference between $l_0$ and $L$ is greater than $0.4$ at $p=0$ and $0.3$ at $p=1$). This confirms that $R\geq l_0>L$ on $[0,1]$, which finishes the proof. 
\end{proof}

\begin{proof}[Proof of Lemma \ref{lm:D(p)}]
Set $x=\frac{1-p}{2}$, $0<x<1/2$. Then,
    \begin{align*}
        D(p) &= 2^{p/2}\frac{\Gamma(1-p)}{\Gamma(1-p/2)^3} = 2^{1/2-x}\frac{\Gamma(2x)}{\Gamma(x+1/2)^3} = \frac{2^{x-1/2}\Gamma(x)}{\sqrt{\pi}\Gamma(x+\frac12)^2}.
    \end{align*}
Thus, the convexity of $p \mapsto \log D(p)$ on $(0,1)$ is equivalent to the convexity of 
\[
f(x)= \log \Gamma(x) - 2\log\Gamma(x+1/2), \qquad  0 < x < \frac12.
\]
Using the series representation $(\log \Gamma(z))'' = \sum_{n=0}^\infty 1/(n+z)^2$, we see that 
    \begin{align*}
        f''(x) &= \sum_{n=0}^\infty \frac{1}{(x+n)^2} - 2\sum_{n=0}^\infty \frac{1}{(x+n+\frac12)^2}\\
        &= \frac{1}{x^2}-\frac{2}{(x+\frac12)^2}+\sum_{n=1}^\infty \frac{1}{(x+n)^2} - 2\sum_{n=1}^\infty \frac{1}{(x+n+\frac12)^2}.
    \end{align*}
Observe that for $0 < x < \frac12$, 
    \[
\sum_{n=1}^\infty \frac{1}{(x+n)^2} - 2\sum_{n=1}^\infty \frac{1}{(x+n+\frac12)^2} > \sum_{n=1}^\infty \frac{1}{(\frac12+n)^2} - 2\sum_{n=1}^\infty \frac{1}{(n+\frac12)^2} =4 -\frac{\pi^2}{2}.
\]
    Thus 
    \[
f''(x) > \frac{1}{x^2}-\frac{2}{(x+\frac{1}{2})^2}+4-\frac{\pi^2}{2}.
\]
We check that the right hand side is decreasing, thus it is further lower bounded by its value at $x=\frac12$ which is $6-\frac{\pi^2}{2} > 0$. This establishes the convexity of $\log D(p)$. 

Moreover, $D(0)=1$, and $(\log D(p))'|_{p=0} = \frac{\log 2-\gamma}{2}  >0$, so $p\mapsto \log D(p)$ is strictly increasing on $(0,1)$, with $\log D(0) = 0$. 
\end{proof}

\begin{proof}[Proof of Lemma \ref{lm:Fp(3)}]
We adapt and follow rather closely the approach employed in \cite{CST}. We split the integral defining $F_p(3)$ into four integrals, over the consecutive intervals $(0, t_1)$, $(t_1, t_2)$, $(t_2, t_3)$, $(t_3, +\infty)$,
\[ 
F_p(3) = A_1 + \dots + A_4, \qquad A_j = \int_{t_{j-1}}^{t_j} |J_0(t)|^3t^{p-1}\dd t,
 \]
(with the obvious convention $t_0 = 0$, $t_4 = +\infty$). Now we bound each contribution using different methods. With the hindsight of the required numerical calculations, we choose the nodes
\[ 
t_1 = 1, \ t_2 = 3, \ t_3 = 12.
 \]

For $A_1$, we use the elementary pointwise bound
\[ 
\exp\left\{-3\left(\frac{t^2}{4} + \frac{t^4}{64}\right)\right\} \leq 1 - \frac{3t^2}{4} + \frac{15t^4}{64}, \qquad t \geq 0
 \]
which can be justified by the means of Taylor's expansion at $t=0$ with the Lagrange remainder applied to the difference between the right and left hand sides whose derivatives up to the order $4$ vanish at $t=0$ (that is how the right hand side is chosen), and the fifth one turns out to be positive on $0 < t  < 1$. This leads to
\begin{align}\notag
A_1 \leq \int_0^{t_1} \left(1 - \frac{3t^2}{4} + \frac{15t^4}{64}\right) t^{p-1}\dd t &= \frac{1}{p}-\frac{3}{4(p+2)} + \frac{15}{64(p+4)} \\
&< \frac{1}{p}-\frac{3}{4(p+2)} + \frac{15}{256}\label{eq:A1}.
\end{align}

For $A_2$, we tactlessly upper bound $|J_0|^3$ by a piecewise constant function resulting from chopping $(t_1, t_2)$ into $m$ consecutive subintervals of equal length, using $\sup_{[u,v]} |J_0| \leq \max\{|J_0(a), J_0(b)\}$ for every $t_1 = 1 \leq a < b \leq 3 = t_2$ (since $|J_0(t)|$ is first decreasing to its first zero occurring at about $t = 2.40..$ and then increasing to its first local maximum occurring at about $t=3.83..$),
\begin{align}\notag
A_2 \leq \sum_{k=0}^{(t_2-t_1)m-1} \max\left\{\left|J_0\left(1 + \frac{k}{m}\right)\right|^3, \left|J_0\left(1 + \frac{k+1}{m}\right)\right|^3\right\}\int_{t_1+k/m}^{t_1+(k+1)/m} t^{p-1} \dd t \\ 
< \sum_{k=0}^{(t_2-t_1)m-1} \max\left\{\left|J_0\left(t_1 + \frac{k}{m}\right)\right|^3, \left|J_0\left(t_1 + \frac{k+1}{m}\right)\right|^3\right\}\frac{1}{m}\left(t_1+\frac{k}{m}\right)^{p-1}.\label{eq:A2}
\end{align}

For $A_3$, quite similarly, but due to the lack of monotonicity, we use Riemann sums (with midpoints) and control the error crudely bounding the derivative, using the elementary scheme $\int_a^b h(t)t^{p-1}\dd t \leq h(\frac{a+b}{2})\int_a^b t^{p-1}\dd t + \sup_{[a,b]}|h'|\frac{1}{2(b-a)}\int_a^bt^{p-1}\dd t$. We arrive at
\begin{align}\notag
A_3 \leq \sum_{k=0}^{(t_3-t_2)m-1} \left|J_0\left(t_2 + \frac{k+\frac12}{m}\right)\right|^3\int_{t_2+k/m}^{t_2+(k+1)/m} t^{p-1} \dd t + L\frac{1}{2m}\int_{t_2}^{t_3} t^{p-1}\dd t \\ 
< \sum_{k=0}^{(t_3-t_2)m-1} \left|J_0\left(t_2 + \frac{k+\frac12}{m}\right)\right|^3\frac{1}{m}\left(t_2+\frac{k}{m}\right)^{p-1} + \frac{L(t_3-t_2)t_2^{p-1}}{2m}, \label{eq:A3}
\end{align}
where
\[ 
L = \sup_{t \in [t_2, t_3]}\left|\frac{\dd}{\dd t} |J_0(t)|^3\right| < 0.1.
 \]
Since $\left|\frac{\dd}{\dd t} |J_0(t)|^3\right| = 3|J_0(t)|^2|J_1(t)|$, this is directly verified numerically (there are concrete precise approximations to $J_0$ and $J_1$, see e.g. 9.4.3 in \cite{AS}).

Finally, for $A_4$, we simply use \eqref{eq:J0-up-bd1} which on $(t_3, +\infty)$ turns out to be accurate enough,
\begin{align}\label{eq:A4}
A_4 \leq \int_{t_3}^\infty \left(\frac{2}{\pi t}\right)^{3/2}t^{p-1}\dd t = \left(\frac{2}{\pi }\right)^{3/2} \frac{t_3^{p-3/2}}{3/2 - p}.
\end{align}
With hindsight, we choose
\[ 
m = 100.
 \]

Let us denote the bounds from \eqref{eq:A1}--\eqref{eq:A4} by $B_1(p), \dots, B_4(p)$ and let
\begin{align*}
L(p) &= p(B_1(p) + \dots + B_4(p)), \\
R(p) &= pe^{-p/4}G_p(2) = pe^{-p/4}2^{p-1}\Gamma(1-p) \Gamma(p/2) \Gamma(1-p/2)^{-3} \\
&= e^{-p/4}2^{p/2}\Gamma(1+p/2)D(p),
 \end{align*}
as per \eqref{eq:def-Gp} and \eqref{eq:def-Dp}. It is straightforward to argue that $L(p)$ is convex (in fact, each $pB_j(p)$, $j = 1, \dots, 4$ is convex, e.g. by directly examining the derivatives of $pB_1(p)$ and $pB_4(p)$, as well as noting that $pB_2(p)$ and $pB_3(p)$ are of the form $\sum \lambda_i p a_i^p$ with constants $\lambda_i \geq 0$, $a_i \geq 1$). Plainly, $L(0) = R(0) = 1$. For $0 < p \leq 0.02$, we use the tangent $\ell_0(p) = R(0) + R'(0)p$ to lower bound $R(p)$, and calculate  that $\ell_0(p) - L(p) > 10^{-5}$, to obtain $R(p) \geq \ell_0(p) \geq L(p)$, $0 < p \leq 0.02$. 
To finish, we divide the remaining interval $(0.02, 1)$ into $(u_j, u_{j+1})$, $1 \leq j \leq 5$ using nodes 
\[
u_1 = 0.02, u_2 = 0.06, u_3 = 0.15, u_4 = 0.3, u_5 = 0.7, u_6 = 1.
\]
On each segment $[u_j, u_{j+1}]$, we lower bound $R(p)$ using its tangent at the starting point, $\ell_j(p) = R(u_j) + R'(u_j)(p-u_j)$, and then compare it against $L(p)$ looking at the end-points, that is verify that the differences $d_-(j) = \ell_j(u_j) - L(u_j)$ and $d_+(j) = \ell_j(u_{j+1}) - L(u_{j+1})$ are positive, which gives $R(p) > L(p)$ for all $u_j \leq p \leq u_{j+1}$. The corresponding numerical bounds are collected in Table \ref{tab:d}. This finishes the proof.
\end{proof}
\begin{table}[h!]
  \begin{center}
    \label{tab:d}
    \begin{tabular}{c|c|c|c|c|c} 
      $j$ & 1 & 2 & 3 & 4 & 5 \\
      \hline
      $d_-(j)$ & $0.00017$ & $0.0008$ & $0.004$ & $0.02$ & $0.28$ \\
      $d_+(j)$ & $0.00017$  & $0.0006$ & $0.06$ & $0.003$ & $0.7$ \\
    \end{tabular}
\vspace*{1em}
    \caption{Proof of Lemma \ref{lm:Fp(3)}: Lower bounds on the differences $d_{\pm}(j)$.}
  \end{center}
\end{table}

\subsection{The inductive argument: Proof of Theorem \ref{thm:main-ind}}\label{sec:ind}

We follow closely the scheme of \cite{NP}. For Steinhaus random variables, working with complex coefficients makes the required algebraic manipulations particularly natural and suggestive. As a side note: their adaptations to higher-dimensional settings as well as the uniform distribution on the real line have been developed in \cite{CKT, CGT, CST}. 

We begin the inductive argument. For the base case $n=2$, we have the following claim, the (easy) proof of which we defer for now.

\textbf{Claim.} $\E|\xi_1 + z\xi_2|^{-p} \leq C_p\Phi_p(|z|^2)$, $z \in \C$.

Let $n \geq 3$ and suppose \eqref{eq:main-ind} holds for every sequence $(z_2, \dots)$ of complex numbers of length $n-1$. Let $z_2, \dots, z_n \in \C$ and 
\[
x = |z_2|^2+\dots+|z_n|^2.
\]
We want to show \eqref{eq:main-ind}. There are 3 cases.

Case (a): $|z_k| > 1$ for some $2 \leq k \leq n$. Then $x > 1$, so by the definition of $\Phi_p$, \eqref{eq:main-ind} coincides with
\begin{equation}\label{eq:K''}
\E\left|\sum_{k=1}^n z_k\xi_k\right|^{-p} \leq C_p(|z_1|^2+|z_2|^2+\ldots+|z_n|^2)^{p/2},
\end{equation}
where $z_1 = 1$.
Let $z_1^\ast,\ldots,z_n^\ast$ be a rearrangement of $z_1,\ldots,z_n$ such that $|z_k^\ast|\geq |z_{k+1}^\ast|$ for every $k=1,\ldots,n-1$ and let $z'_k = \frac{z_k^\ast}{|z_1^\ast|}$ for every $k=1,\ldots,n$, so that $|z'_1|=1$ and $|z'_k|\leq 1$ for $k=2,\ldots,n$. Then due to the homogeneity of \eqref{eq:K''}, it is enough to prove
\[
\E\left|\xi_1 + \sum_{k=2}^n z'_k\xi_k\right|^{-p} \leq C_p\Phi_p(|z'_2|^2+\dots+|z'_n|^2),
\]
which is handled by the next cases.

Case (b): $|z_k| \leq 1$ for every $2 \leq k \leq n$ and $x \geq 1$. This is handled by the Fourier part, recall Remark \ref{rem:Fourier-case}.

Case (c): $|z_k| \leq 1$ for every $2 \leq k \leq n$ and $x < 1$. Since $(\xi_{n-1},\xi_n)$ has the same distribution as $(\xi_{n-1},\xi_{n-1}\xi_n)$, we have,
\begin{align*}
\E\left|\xi_1 + \sum_{k=2}^n z_k\xi_k\right|^{-p} &= \E\left|\xi_1 + z_2\xi_2 + \dots + z_{n-1}\xi_{n-1}+ z_n\xi_n\xi_{n-1}\right|^{-p} \\
&= \E_{\xi_n}\left[\E_{(\xi_k)_{k=2}^{n-1}}\left|\xi_1 +  z_2\xi_2 + \dots + z_{n-2}\xi_{n-2} + (z_{n-1}+z_n\xi_n)\xi_{n-1} \right|^{-p}\right].
\end{align*}
By the inductive hypothesis applied to the sequence $(z_2, \dots, z_{n-2}, z_{n-1}+z_n\xi_n)$ (conditioned on the value of $\xi_n$), we get
\[
\E\left|\xi_1 + \sum_{k=2}^n z_k\xi_k\right|^{-p} \leq C_p\E_{\xi_n}\Phi_p(|z_2|^2 + \dots + |z_{n-2}|^2 + |z_{n-1}+z_n\xi_n|^2).
\]
Note that $|z_2|^2 + \dots + |z_{n-2}|^2 + |z_{n-1}+z_n\xi_n|^2 = x + 2\text{Re}(z_{n-1}\overline{z_n\xi_n})$, so by the symmetry of $\xi_n$ and the extended convexity of $\Phi_p$  from Lemma \ref{lm:extended-convexity} applied to $x' = x + 2\text{Re}(z_{n-1}\overline{z_n\xi_n})$ and $x'' = x - 2\text{Re}(z_{n-1}\overline{z_n\xi_n})$ which satisfy $x', x'' \geq 0$ and $\frac{x'+x''}{2} = x < 1$, we obtain
\begin{align*}
&\E_{\xi_n}\Phi_p(|z_2|^2 + \dots + |z_{n-2}|^2 + |z_{n-1}+z_n\xi_n|^2) \\
&= \E_{\xi_n}\frac{\Phi_p(x + 2\text{Re}(z_{n-1}\overline{z_n\xi_n})) + \Phi_p(x - 2\text{Re}(z_{n-1}\overline{z_n\xi_n}))}{2} \leq \Phi_p(x),
\end{align*}
which finishes the proof of the inductive step.

It remains to show the claim. 

\begin{proof}[Proof of the claim.]
Letting $x = |z|^2$, equivalently, the claim is that $\E|\xi_1 + \sqrt{x}\xi_2|^{-p} \leq C_p\Phi_p(x)$ for every $x \geq 0$. 

When $x \geq 1$, $\Phi_p(x) = \phi_p(x) = (1+x)^{-p/2}$. Multiplying both sides of the desired inequality by $x^{p/2}$ and leveraging its homogeneity, it becomes $\E|\sqrt{1/x}\xi_1 + \xi_2|^{-p} \leq C_p\phi_p(1/x)$. Since $\phi_p \leq \Phi_p$, this case will follow from the case $x \leq 1$.

When $0 < x < 1$, we have
\begin{align*}
\E|\xi_1 + \sqrt{x}\xi_2|^{-p} &= \E|1 + \sqrt{x}\xi_2|^{-p/2} =  \frac{1}{2\pi}\int_0^{2\pi}\left|1+\sqrt{x}e^{it}\right|^{-p}\dd t \\
&= \frac{1}{2\pi}\int_0^{2\pi}\left(1+\sqrt{x}e^{it}\right)^{-p/2}\left(1+\sqrt{x}e^{-it}\right)^{-p/2}\dd t \\
&=  \sum_{k, l=0}^\infty \binom{-p/2}{k}\binom{-p/2}{l}x^{(k+l)/2}\frac{1}{2\pi}\int_0^{2\pi} e^{it(k-l)}\dd t \\
&= \sum_{k=0}^\infty \binom{-p/2}{k}^2x^k,
 \end{align*}
see also Lemma 13 in \cite{CST}, or (27) in \cite{Ko}. In particular, the left hand side of the claimed inequality is increasing in $x$. Since $\Phi_p(x)$ is decreasing, it suffices to check that the inequality holds at $x=1$, which becomes equality by the definition of~$C_p$.
\end{proof}

\section{A comment on \cite{Ko}}\label{sec:Ko}
As mentioned in the introduction, the main result of \cite{Ko} is that for $0 < p < 1$ and every $z_1, \dots, z_n \in \C$, we have
\begin{equation}\label{eq:K}
\E|z_1\xi_1+\dots+z_n\xi_n|^p \geq A_p^p\left(|z_1|^2+\dots+|z_n|^2\right)^{p/2}.
\end{equation}
with $A_p$ defined in \eqref{eq:Ap-Bp-sharp}. This result is correct, however its proof contains an error.

Instead of the ``global'' inductive argument (as the one employed above), the proof of \eqref{eq:K} in \cite{Ko} aims at showing by induction on $n$ that a strengthened version of \eqref{eq:K} -- Claim (29) in \cite{Ko} -- holds for all sequences $z= (z_1, \dots, z_n)$ with $\|z||_\infty \geq \frac{1}{\sqrt2}\|z\|_2$ (with the opposite case handled independently via Fourier-analytic arguments). The argument proceeds  under the normalisation $|z_1| = \|z\|_\infty = 1$, so for all sequences $(z_2, \dots, z_n)$ with $\sum_{k=2}^n |z_k|^2 \leq 1$, and the inductive hypothesis is applied to sequences $(z_2, \dots, z_{n-2}, z_{n-1}+\xi z_n)$ with a \emph{fixed} arbitrary $\xi \in \C$, $|\xi| = 1$  (line -5 on p. 52). This is erroneous because such sequences in general may fail to satisfy the assumption $\sum_{k=2}^n |z_k|^2 \leq 1$. This problem seems unavoidable, resulting from an earlier quite loose bound of an average $\E h(\xi_1)$ by an infimal value $\inf_{|\xi_1|=1} h(\xi_1)$ (l.14 on p.52).

One can of course fix the issue by following verbatim the ``global'' inductive argument from Section \ref{sec:ind} above (with Case (b) -- the Fourier part -- established in \cite{Ko}, as well as all the other relevant claims, like the base case for induction, or the extended convexity).

\section{Sharp R\'enyi entropy comparison}\label{sec:Renyi}

For a continuous complex-valued random variable $X$ with density $f$ on $\mathbb{C}$, the
Rényi entropy of order $p \in [0,\infty]$ (put forward in \cite{Ren}) is defined by
\[
    h_p(X)=h_p(f)
    = \frac{1}{1-p} \log\!\left( \int_{\mathbb{C}} f(x)^p \, dx \right),
\]
with the cases $p \in \{0, 1, \infty\}$ understood by usual limiting expressions,
\[
    h_0(f)= \log \mathrm{Leb}\Big(\mathrm{supp}(f)\Big), \qquad h(f) = h_1(f) = - \int_{\mathbb{C}} f \log f, \qquad 
    h_\infty(f)= - \log \|f\|_\infty,
\]
(provided the integrals exist; here $ \mathrm{Leb}\Big(\mathrm{supp}(f)\Big)$ denotes the Lebesgue measure of the support of $f$  and $\|f\|_\infty$ denotes the essential supremum of $f$). Akin to sharp moment comparison inequalities, it has been of interest to investigate analogous entropy bounds, particularly under the fixed variance constraint, in a variety of contexts (discrete, probabilistic, information theoretic,  geometric, and beyond), see \cite{MMX}. We refer to the comprehensive introduction in \cite{BNZ} for motivations as well as further references, and mention here only in passing several other works \cite{BN, BNT, CST-gamma, ENT-GM, MNR}. 

To parallel a result obtained recently for the uniform distribution in \cite{CGT}, we record the following sharp upper bound.

\begin{theorem}\label{thm:Renyi}
Let $p \in [0,1]$. For every unit vector $a = (a_1, \dots, a_n)$ in $\R^n$, we have 
\[ 
h_p\left(\sum_{j=1}^n a_j\xi_j\right) \leq h_p(Z),
 \]
where $Z$ is the standard complex Gaussian with mean $0$ and covariance $\tfrac12 I_{2\times2}$, so that $\E|Z|^2 = 1$.
\end{theorem}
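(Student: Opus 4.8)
The plan is to convert the entropy inequality into a moment inequality that is, in effect, already in hand. Since $p<1$, the function $u\mapsto\frac1{1-p}\log u$ is increasing, so for $0<p<1$ the claim $h_p(X)\le h_p(Z)$ for $X=\sum_{j=1}^n a_j\xi_j$ (with $\sum_j a_j^2=1$) is equivalent to $\int_{\C}f_X^p\le\int_{\C}f_Z^p$, where $f_X$ and $f_Z$ are the respective densities. The two endpoints are disposed of directly: for $p=0$ we have $h_0(Z)=+\infty$ since the Gaussian has full support, whereas $f_X$ is compactly supported; and for $p=1$ this is the classical maximum-entropy property of the Gaussian among random vectors with a prescribed covariance matrix, which here is $\tfrac12 I_{2\times2}$ for both $X$ and $Z$ (indeed $\mathrm{Cov}(X)=\sum_j a_j^2\,\mathrm{Cov}(\xi_j)=\tfrac12 I_{2\times2}$). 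If $X$ is not absolutely continuous (e.g.\ $n=1$) then $h_p(X)=-\infty$ and there is nothing to prove, so henceforth $0<p<1$ and $X$ has a density.

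The next step is to linearise $t\mapsto t^p$ against the Gaussian density. Applying the elementary concavity bound $t^p\le pt+1-p$ ($t\ge0$) with $t=f_X(x)/f_Z(x)$ and multiplying by $f_Z(x)^p$ gives the pointwise inequality
\[
f_X(x)^p\le p\,f_Z(x)^{p-1}f_X(x)+(1-p)f_Z(x)^p,\qquad x\in\C,
\]
which upon integration over $\C$ yields $\int f_X^p\le p\,\E[f_Z(X)^{p-1}]+(1-p)\int f_Z^p$; since the right-hand side will turn out finite, this also confirms $f_X\in L^p$. Plugging in $f_Z(x)=\pi^{-1}e^{-|x|^2}$, one has the routine evaluations $\int_{\C}f_Z^p=\pi^{1-p}/p$ and $f_Z(x)^{p-1}=\pi^{1-p}e^{(1-p)|x|^2}$, so after cancellation the target $\int f_X^p\le\int f_Z^p$ reduces to the single exponential-moment comparison
\[
\E\big[e^{(1-p)|X|^2}\big]\le\frac1p=\E\big[e^{(1-p)|Z|^2}\big],
\]
the last equality because $|Z|^2$ is a standard exponential random variable.

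This exponential-moment bound I would derive from the sharp \emph{even}-moment comparison already recorded in \eqref{eq:Ap-Bp-sharp}. Expanding the exponential and interchanging sum and expectation by Tonelli,
\[
\E\big[e^{(1-p)|X|^2}\big]=\sum_{k=0}^\infty\frac{(1-p)^k}{k!}\,\E|X|^{2k},
\]
and similarly for $Z$, so it is enough to check $\E|X|^{2k}\le\E|Z|^{2k}$ for every integer $k\ge0$. For $k=0$ this is an equality, and for $k\ge1$ it is precisely $\|X\|_{2k}\le\|Z\|_{2k}$, i.e.\ the equality $B_{2k}=\|Z\|_{2k}$ from \eqref{eq:Ap-Bp-sharp} (Peskir \cite{Pes}, and K\"onig--Culverhouse), recalling that $\|X\|_2=1$. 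Since $\E|Z|^{2k}=k!$, the series for $Z$ equals $\sum_k(1-p)^k=1/p<\infty$, which simultaneously licenses the interchange and finishes the proof.

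There is no genuinely hard step; the whole argument is a short reduction, and --- somewhat amusingly --- it does not use Theorem \ref{thm:cp} at all, only the positive even-moment bounds. Its crux is the observation that, taken against the Gaussian, the tangent-line trick turns $\int f_Z^{p-1}f_X$ into an exponential moment of $|X|^2$, which collapses onto the integer moments $\E|X|^{2k}$ --- exactly the regime in which sharp comparison with the Gaussian is classical. The only points needing a little care are the degenerate non-absolutely-continuous cases and the verification that $X$ has covariance $\tfrac12 I_{2\times2}$ for the $p=1$ endpoint.
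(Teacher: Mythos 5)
Your proof is correct and follows essentially the same route as the paper's: both reduce the claim to the key inequality $\int f_X\, g^{p-1} \leq \int g^{p}$ (you via the tangent-line bound $t^p \leq pt+1-p$, the paper via H\"older's inequality --- two faces of the same linearisation), then expand $g^{p-1}=\pi^{1-p}e^{(1-p)|x|^2}$ into even moments and invoke the sharp bound $B_{2k}=\|Z\|_{2k}$ from \eqref{eq:Ap-Bp-sharp}. Your explicit treatment of the endpoints $p=0,1$ and of the singular case $n=1$ is a welcome extra bit of care, but the substance is the same.
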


The proof of this result follows verbatim the proof of Theorem~2 from \cite{CGT}. We shall not repeat it here, but only recall that it relies on the sharp upper bound from  \eqref{eq:Ap-Bp} for even moments, $p = 2, 4, 6, \dots$ which holds with the sharp Gaussian constant $B_p = \|Z\|_p$, \eqref{eq:Ap-Bp-sharp}. We also remark that since this continues to hold for $\xi_j$ uniform on Euclidean spheres in all dimensions (see \cite{BC}), Theorem \ref{thm:Renyi} automatically extends to that case as well. 

\begin{remark}\label{rem:ent-no-lower-bd}
Since the distribution of $\xi_1$ is singular on $\C$ (as supported on the set of measure $0$), $h_p(\xi_1) = -\infty$. Consequently, for Steinhaus sums $S$ of fixed variance, there is no nontrivial lower bound on their $p$-R\'enyi entropy. This contrasts the case of convolutions of any continuous distributions of finite entropy, for which we can obtain a lower-bound directly from the entropy power inequality (for instance, see Theorem 2 in \cite{CGT}). We find it of interest to devise meaningful lower bounds for Steinhaus sums $S = \sum_{j=1}^n a_j\xi_j$ when one precludes $S = \xi_1$, for example by constraining the $\ell_\infty$ norm of the coefficient vector $a$, say imposing $\|a\|_\infty < 1 - \delta$ for some small constant $\delta > 0$, provided that $\|a\|_2 = 1$. We leave it for further investigations.
\end{remark}



\end{document}